\documentclass[english,11pt]{article}
\usepackage{tikz}
\usetikzlibrary{decorations.pathreplacing}
\usepackage{enumerate}
\usepackage[shortlabels]{enumitem}
\usepackage{verbatim}
\usepackage[margin=1in]{geometry}
\usepackage{amssymb}
\usepackage{caption}
\usepackage{bbm}
\usepackage{amsthm}
\usepackage{enumitem}

\usepackage{scalerel}    
\usepackage{stmaryrd}   



\makeatletter
\DeclareRobustCommand\widecheck[1]{{\mathpalette\@widecheck{#1}}}
\def\@widecheck#1#2{%
    \setbox\z@\hbox{\m@th$#1#2$}%
    \setbox\tw@\hbox{\m@th$#1%
       \widehat{%
          \vrule\@width\z@\@height\ht\z@
          \vrule\@height\z@\@width\wd\z@}$}%
    \dp\tw@-\ht\z@
    \@tempdima\ht\z@ \advance\@tempdima2\ht\tw@ \divide\@tempdima\thr@@
    \setbox\tw@\hbox{%
       \raise\@tempdima\hbox{\scalebox{1}[-1]{\lower\@tempdima\box
\tw@}}}%
    {\ooalign{\box\tw@ \cr \box\z@}}}
\makeatother

\usepackage{rotating,centernot,cancel}    

\usepackage{amsmath}
\usepackage{tocloft}
\usepackage{float}

\usepackage{tcolorbox}   
\PassOptionsToPackage{hyphens}{url}\usepackage{hyperref}
\usepackage{babel}
\theoremstyle{plain}

\input amssym.def
\input amssym.tex

\def\beq{\begin{equation}}
\def\eeq{\end{equation}}
\def\beqn{\begin{eqnarray}}
\def\eeqn{\end{eqnarray}}

\usepackage{mathtools}
\DeclarePairedDelimiter\floor{\lfloor}{\rfloor}



\newtheorem{theorem}{Theorem}[section]

\newtheorem{lemma}[theorem]{Lemma} 
\newtheorem*{lemma*}{Lemma}
\newtheorem{proposition}[theorem]{Proposition} 

\theoremstyle{remark}
\newtheorem{remark}[theorem]{Remark}

\theoremstyle{definition}

\numberwithin{figure}{section}

\def\R{\mathbb R}

\def\Z{\mathbb Z}

\def\to{\rightarrow}
\def\dlim[#1][#2]{\lim_{#1 \to #2, #1 \neq #2}}

\def\Var{\textup{$\mathbb{V}$ar}}
\def\Exp{\textup{Exp}}


\newcommand{\be}{\begin{equation}}
\newcommand{\ee}{\end{equation}}

\providecommand{\abs}[1]{\vert#1\vert}

\newcommand{\fl}[1]{\lfloor{#1}\rfloor} 

\newcommand\bbullet{{{\scaleobj{0.6}{\bullet}}}} 
\newcommand\mydots{\hbox to 1em{.\hss.\hss.}}

\def\w{\omega} 

\def\wt{\widetilde}    \def\wc{\widecheck}

\def\bgeod#1#2{\mathbf{b}^{#1, #2}}




\newcommand{\myfootnote}[1]{
    \renewcommand{\thefootnote}{}
    \footnotetext{\scriptsize#1}
    \renewcommand{\thefootnote}{\arabic{footnote}}
}




\allowdisplaybreaks

\title{Lower bound for large transversal fluctuations\\ in exactly solvable KPZ models}
\author{Xiao Shen\thanks{\scriptsize{Department of Mathematics, University of Utah, Utah, USA. \texttt{xiao.shen@utah.edu}}}}
\date{}
\setcounter{tocdepth}{2}
\begin{document}
\maketitle

\begin{abstract}
The study of transversal fluctuation of the optimal path has been a crucial aspect of the Kadar-Parisi-Zhang (KPZ) universality class. In this paper, we establish a new probability lower bound, with optimal exponential order, for the rare event in which a given level of the optimal path has a large transversal fluctuation. We present our results in both zero and positive temperature settings. The previously known lower bounds were obtained in zero temperature models, and they hold for the maximum transversal fluctuation along the entire geodesic \cite{ham-sar-20} or the starting portion of the geodesic at a local scale \cite{aga-23}. Our result improves upon these as now the rare event can demand where the large fluctuation occurs exactly along the optimal path, on both local and global scales.
Our proof utilizes the coupling method: we first obtain a version of the estimate in the semi-infinite setting using duality and then transfer the result to finite paths using planar monotonicity. Our method differs from the previous works \cite{aga-23, ham-sar-20}, and in fact, we do not require fine information about the left tail moderate deviation, which played a crucial role in \cite{aga-23, ham-sar-20}.
\end{abstract}

\myfootnote{Date: \today}
\myfootnote{2010 Mathematics Subject Classification. 60K35, 	60K37}
\myfootnote{Key words: last-passage percolation, directed polymer, transversal fluctuation, Kardar-Parisi-Zhang, Busemann function.}

\section{Introduction and results}\label{intro}

The exploration of universality is at the core of probability and statistics. It refers to a remarkable phenomenon observed in many random growths, where diverse systems with different underlying microscopic details exhibit universal macroscopic behavior. 
One classic example illustrating universality is the \textit{central limit theorem} (CLT), which states that the large-scale behavior of the centered sum becomes independent of the specific distribution of the individual summands.

Another strikingly different universality class was predicted by Kardar, Parisi, and Zhang \cite{Kar-Par-Zha-86} in 1986, known as the \textit{KPZ universality class}. This new universal behavior was anticipated to emerge across a wide range of stochastic models with spatial dependence.
Extensive computer simulations and empirical laboratory experiments have collectively suggested that the KPZ universality class is extremely rich. It encompasses percolation models, directed polymers, interacting particle systems, random tilings, stochastic partial differential equations, and more.

An important subset within the KPZ universality class comprises stochastic models that characterize optimal paths navigating through a random environment. Central to this domain is the analysis of transversal fluctuation, a measure of the deviation of optimal paths from certain reference lines. Understanding transversal fluctuations has proven crucial in unraveling other significant random geometric properties and space-time profiles associated with random growth phenomena. The main focus of this paper is a new probability lower bound for when the optimal paths exhibit an unusually large transversal fluctuation. 

We present our results for two representative KPZ lattice models: the corner growth model (CGM), also known as exponential last-passage percolation (LPP), and the inverse-gamma polymer. These models represent the zero-temperature and positive-temperature KPZ models, respectively. This classification is because, under certain scaling, the CGM can be viewed as a limit of the inverse-gamma polymer. Thus, taking this limit is often called scaling the temperature parameter from a positive value to zero.

\subsection{The corner growth model}

The \textit{corner growth model} is defined on the integer lattice  $\mathbb{Z}^2$.  Let $\{\omega_{\mathbf z}\}_{\mathbf z\in \Z^2}$ be i.i.d.~Exp(1) random variables  associated with the integer lattice.
Given two coordinatewise-ordered points ${\mathbf{x}}, {\mathbf{y}}$ of $ \mathbb{Z}^2$, the last-passage value $G_{\mathbf{x}, \mathbf{y}}$ is  the maximum accumulation of weights along each directed path from $\mathbf{x}$ to $\mathbf{y}$:
\begin{equation}\label{sec2G}G_{\mathbf x,\mathbf y} = \max_{{\boldsymbol\gamma} \in \mathbb{X}_{\mathbf x,\mathbf y}} \sum_{\mathbf{z} \in {\boldsymbol\gamma}} \omega_{\mathbf z}
\end{equation}
where $\mathbb{X}_{\mathbf x,\mathbf y}$ is the collection of paths ${\boldsymbol\gamma}=({\boldsymbol\gamma}_k)_{k=0}^{|\mathbf y-\mathbf x|_1}$ that satisfy ${\boldsymbol\gamma}_0=\mathbf x$, ${\boldsymbol\gamma}_{|\mathbf y-\mathbf x|_1}=\mathbf y$, and ${\boldsymbol\gamma}_{k+1}-{\boldsymbol\gamma}_k\in\{\mathbf e_1, \mathbf e_2\}$.
Moreover, the almost surely unique maximizing path for $G_{\mathbf{x}, \mathbf{y}}$ is often referred to as the \textit{geodesic}.

Fix the starting point $\mathbf{x} = {(0,0)}$ and the endpoint $\mathbf{y} = (n,n)$, the {\it transversal fluctuation} of the geodesic of $G_{(0,0), (n,n)}$ quantifies the deviation of the geodesic path from the diagonal line between $(0,0)$ and $(n,n)$. If we look at the middle of the geodesic, it is expected to deviate away from the diagonal on the scale $n^{2/3}$. It has been conjectured that the exponent $2/3$ should remain valid for a broad range of weights beyond the exponential distribution.

\subsection{The inverse-gamma polymer}\label{inv_intro}

Recall that a random variable $X$ has the inverse-gamma distribution with shape parameter $\mu\in(0,\infty)$, abbreviated as $X\sim \text{Ga}^{-1}(\mu)$, if $X^{-1}$ has the gamma distribution with the shape parameter $\mu$. 
The inverse-gamma polymer is also defined in $\mathbb{Z}^2$. Let $\{\omega_\mathbf z\}_{\mathbf z\in \mathbb{Z}^2}$ be i.i.d.~inverse-gamma distributed random variables with a fixed shape parameter $\mu \in (0, \infty)$. To maintain consistency in the notation with the CGM, we set $\mu=1$, although we note that the exact same arguments can also be applied for general $\mu\in (0, \infty)$. For two coordinatewise-ordered vertices $\mathbf x$ and $\mathbf y$ of $\mathbb{Z}^2$, the \textit{point-to-point partition function} is defined by 
\begin{equation}\label{def_part}
Z_{\mathbf x, \mathbf y} = \sum_{\boldsymbol\gamma \in \mathbb{X}_{\mathbf x, \mathbf y}} \prod_{\mathbf{z}\in \boldsymbol\gamma} \omega_{\mathbf z}.
\end{equation}
We use the convention $Z_{\mathbf x,\mathbf y} =  0$ if $\mathbf x\leq \mathbf y$ fails. 
Moreover, the \textit{free energy} is defined as $\log Z_{\mathbf x, \mathbf y}$.

The \textit{quenched polymer measure} is a probability measure on the set $\mathbb{X}_{\mathbf x,\mathbf y}$ and is defined by 
$$Q_{\mathbf x,\mathbf y}\{\boldsymbol\gamma\} = \frac{1}{Z_{\mathbf x,\mathbf y}} \prod_{\mathbf{z}\in \boldsymbol\gamma} \omega_{\mathbf z} \qquad \text{ for $\boldsymbol\gamma \in \mathbb{X}_{\mathbf x,\mathbf y}$}.$$
Compared to the CGM, where the geodesic can be thought of as a (random) Dirac-delta measure on $\mathbb{X}_{\mathbf x, \mathbf y}$ which is supported on the geodesic of $G_{\mathbf x, \mathbf y}$, the quenched polymer measure $Q_{\mathbf x, \mathbf y}$ is a (random) probability measure on $\mathbb{X}_{\mathbf x, \mathbf y}$ which is concentrated around the maximizing path of the free energy $\log Z_{\mathbf x, \mathbf y}$. A second layer of randomness enters the picture as we are interested in the geometry of the sampled paths from this quenched polymer path measure $Q_{\mathbf x, \mathbf y}$.

\subsection{Main results}

In Theorem \ref{main1} and Theorem \ref{main2} below, we present our results concerning a probability lower bound for the rare event in which the geodesic or the polymer path has an unusually large transversal fluctuation. For a review of the literature related to this, please refer to Section \ref{contri}.

For each ${\boldsymbol\gamma} \in \mathbb{X}_{(0,0), (n,n)}$ and $0 \leq r \leq n$, let $\mathbf{v}_r^{\textup{min}}({\boldsymbol\gamma})$ denote the lattice point with the minimum $\ell^1$-norm among the intersection of ${\boldsymbol\gamma}$ and the horizontal line $y = r$. The choice of the horizontal line over the anti-diagonal line is made to simplify the notation in our proof. 
The results for the CGM and the inverse-gamma polymer are stated below in Theorem \ref{main1} and Theorem \ref{main2} respectively.
\begin{theorem}\label{main1}
There exist positive constants $C, r_0, n_0, c_0$ such that for each $n\geq n_0, r_0 \leq r \leq n/2$ and $1 \leq t \leq c_0 r^{1/3}$, it holds that
$$\mathbb{P}\Big(\mathbf{e}_1 \cdot \mathbf{v}^{\textup{min}}_r \big(\textup{the geodesic of $G_{(0,0), (n,n)}$}\big) - r \geq tr^{2/3}\Big) \geq e^{-Ct^3}.$$ 
\end{theorem}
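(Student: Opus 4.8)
The plan is to realize the event $\{\mathbf{v}^{\textup{min}}_r \cdot \mathbf{e}_1 - r \geq tr^{2/3}\}$ by comparing the finite geodesic with a semi-infinite geodesic whose direction is tilted away from the diagonal, exploiting the stated strategy of the paper: prove the estimate first for semi-infinite geodesics via duality, then transfer it to finite geodesics by path monotonicity. Concretely, fix a direction $\rho$ slightly off the diagonal (so that the characteristic direction makes the typical horizontal coordinate of the level-$r$ point roughly $r + c\,t r^{2/3}$), and consider the Busemann-function geodesic $\bgeodr{}$ out of the origin in direction $\rho$. The point is that the \emph{typical} behavior of this semi-infinite geodesic already has the level-$r$ transversal displacement of order $t r^{2/3}$; we only need a constant-probability (not a rare-event) lower bound that this semi-infinite geodesic at level $r$ sits to the right of $r + t r^{2/3}$, and the cost $e^{-Ct^3}$ will come from the Radon--Nikodym cost of tilting the direction by the relevant amount, i.e.\ from a change-of-measure / coupling between the stationary CGM in direction $\rho$ and the one in the diagonal direction.

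The key steps, in order, would be: (1) Set $\rho = \rho(t,n)$ so that $\xi(\rho)$, the characteristic direction, passes through a point on the line $y = r$ with $x$-coordinate $\approx r + 2t r^{2/3}$ (choosing constants generously); check that $r_0 \le r \le n/2$ and $t \le c_0 r^{1/3}$ keep $\rho$ in a compact subset of directions bounded away from the axes, which is where all the uniform estimates live. (2) Using the Busemann function / stationary LPP in direction $\rho$, show that with probability bounded below by a constant $p_0>0$, the semi-infinite $\rho$-geodesic from the origin crosses $y=r$ at horizontal coordinate $\ge r + t r^{2/3}$ — this is just concentration of the transversal fluctuation of a stationary geodesic around its characteristic direction, and requires only crude upper-tail control, not the fine left-tail moderate deviations that \cite{aga-23, ham-sar-20} needed. (3) Transfer this to the \emph{diagonal-direction} semi-infinite geodesic (or directly to the finite geodesic from $(0,0)$ to $(n,n)$) by a coupling: the cost of replacing the $\rho$-environment's Busemann data by the diagonal one, restricted to the relevant finite box, is governed by the relative entropy of the two stationary boundary profiles, which on a box of the sizes involved is $O(t^3)$; equivalently, use the explicit product-form Radon--Nikodym derivative of the stationary boundary weights and bound it on the event in question. (4) Finally, use path monotonicity/ordering of geodesics (geodesics to a more NE-biased direction lie weakly to the right) to conclude that if the tilted geodesic is to the right of $r + t r^{2/3}$ at level $r$, then so is, with at least that probability up to the entropy cost, the point $\mathbf{v}^{\textup{min}}_r$ for the diagonal finite geodesic; assemble the bound $\mathbb{P}(\cdots) \ge p_0\, e^{-Ct^3} \ge e^{-C't^3}$ after absorbing constants.

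The main obstacle I expect is step (3): making the change-of-measure between the direction-$\rho$ stationary CGM and the diagonal one \emph{quantitatively} and on the \emph{correct finite region}. One must be careful that the coupling is done on a box large enough to contain both geodesics up to level $r$ (so of linear size $\asymp r$), that the Radon--Nikodym derivative is controlled with the right power of $t$ — the parameter mismatch between the two stationary profiles is of order $t r^{-1/3}$ per edge over $\asymp r$ relevant boundary edges, giving $\log(dQ_\rho/dQ) \asymp r \cdot (t r^{-1/3})^2 \cdot (\text{something}) = t^2 r^{1/3}$ naively, so one actually wants to tilt only the boundary on a shorter scale (length $\asymp t r^{2/3}$ of boundary, mismatch $\asymp t r^{-1/3}$, cost $\asymp t r^{2/3} (t r^{-1/3})^2 = t^3$), and one must verify that tilting only this short initial segment of the stationary boundary still forces the geodesic's level-$r$ point to the right with constant probability. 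Getting the localization of the tilt right — short enough to pay only $t^3$, long enough to steer the geodesic — is the crux, and it is exactly here that path monotonicity (step 4) does the heavy lifting by letting a cheap, local perturbation have a global effect on the geodesic's location.
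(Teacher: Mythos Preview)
Your proposal correctly identifies the change-of-measure mechanism (tilt the stationary boundary by $\asymp t r^{-1/3}$ on a segment of length $\asymp t r^{2/3}$, paying $e^{Ct^3}$), and this is indeed what the paper does for one half of the argument. But step~(4) has a genuine gap: path monotonicity goes the wrong way. If you tilt the direction to $\rho$ with $\xi[\rho]$ to the right of the diagonal, then the $\rho$-semi-infinite geodesic lies weakly to the \emph{right} of the finite geodesic to $(n,n)$, so knowing the $\rho$-geodesic is far right at level $r$ says nothing about $\mathbf v^{\min}_r$. And if you work with the diagonal semi-infinite geodesic $\mathbf b^{1/2,(0,0)}$, there is no a~priori ordering between it and the finite geodesic to $(n,n)$: the semi-infinite geodesic fluctuates around the diagonal and may lie on either side at level $n$.

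The missing idea, which the paper supplies and which is what ``duality'' is actually used for, is to \emph{also} force the diagonal semi-infinite geodesic to lie strictly to the \emph{left} of $(n,n)$ at level $n$. Once that holds, uniqueness of geodesics forces the finite geodesic from $(0,0)$ to $(n,n)$ to stay to the right of $\mathbf b^{1/2,(0,0)}$ at every level, and in particular at level $r$. To get this second constraint simultaneously with the first, the paper brings in the dual (southwest) semi-infinite geodesic $\mathbf b^{\mathrm{sw},1/2,(n,n+1)}$, uses the primal/dual disjointness to convert ``dual geodesic is to the right of the primal at level $r$'' into ``primal geodesic is to the left of $(n,n)$ at level $n$'', and realizes both exit events at once by tilting the shared Busemann boundary on \emph{two} disjoint intervals of $y=r$: one of length $\asymp t r^{2/3}$ with shift $\asymp t r^{-1/3}$ (for the primal), and one of length $\asymp t(n-r)^{2/3}$ with shift $\asymp t(n-r)^{-1/3}$ (for the dual). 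Each tilt costs $e^{Ct^3}$, and under the tilted measure both exit events hold with probability $\ge 1 - Ct^{-3}$ by exit-point estimates for stationary LPP. Your outline does not contain this second tilt or the dual geodesic at all, and without it the transfer from semi-infinite to finite geodesic does not go through.
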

\begin{theorem}\label{main2}
There exist positive constants $C_1, C_2, C_3, r_0, n_0, c_0$ such that for each $n\geq n_0, r_0 \leq r \leq n/2$ and $1 \leq t \leq c_0 r^{1/3}$, it holds that
$$\mathbb{P}\Big(Q_{(0,0), (n,n)}\big\{ {\boldsymbol\gamma} \in \mathbb{X}_{(0,0), (n,n)} \;: \: \mathbf{e}_1 \cdot \mathbf{v}^{\textup{min}}_r ({\boldsymbol\gamma}) - r \geq tr^{2/3} \big\} \geq 1-{e^{-C_1 t^2 n^{1/3}}}\Big) \geq e^{-C_2t^3},$$
and this implies
$$\mathbb{E}\Big[Q_{(0,0), (n,n)}\big\{ {\boldsymbol\gamma} \in \mathbb{X}_{(0,0), (n,n)} \;: \: \mathbf{e}_1 \cdot \mathbf{v}^{\textup{min}}_r ({\boldsymbol\gamma}) - r \geq tr^{2/3} \big\}\Big] \geq e^{-C_3t^3}.$$
\end{theorem}

\begin{remark}
When $r = n/2$ we obtain the result for the midpoint transversal fluctuation. Note because we only require $r$ to be in the range $r_0 \leq r \leq n/2$, we can also look at local fluctuation. This means that we can fix the value of $r$ and allow $n$ to grow arbitrarily large. 
\end{remark}
\begin{remark}
The exponential orders in our lower bounds are optimal as the probabilities above are also upper bounded by $e^{-Ct^3}$, see Section \ref{contri}.
\end{remark}

To prove the theorems, we will start with Theorem \ref{main1} for the CGM in Section \ref{pf_main}. This approach is more intuitive because the existence of geodesics and path monotonicity play crucial roles in our argument. Then, in Section \ref{pf_main2}, we will prove Theorem \ref{main2}, providing the details of how path monotonicity applies in the positive temperature setting, even though there are no geodesics. Lastly, in Section \ref{sec_lem}, we will prove several lemmas for which the arguments apply to both the CGM and the inverse-gamma polymer. This similarity arises from the analogous concentration inequalities for the last-passage value in the CGM and the free energy in the inverse-gamma polymer.


\subsection{Our contribution to the related literature} 
\label{contri}

We start our discussion with the zero temperature models. 
In the seminal work of \cite{kurt_fluc}, it was shown that the transversal fluctuation of the geodesic between $(0,0)$ and $(n,n)$ is of order $n^{2/3+o(1)}$ as $n$ tends to infinity. This, in turn, verified the transversal fluctuation exponent $2/3$.  Subsequently, we narrow our focus to geodesics exhibiting substantial transversal fluctuations of the order $tn^{2/3}$ for large values of $t$. For the study of small transversal fluctuations of the size $\delta n^{2/3}$ for small $\delta$, we refer to the recent work \cite{small_deviation_LPP}.

Let $\mathbf{v}^{\textup{max}}_{n/2}(G_{0,n})$ represent the lattice point with the maximum $\ell^1$-norm among the intersection points of the geodesic from ${(0,0)}$ to $(n, n)$ and the horizontal line $y = n/2$.
Let $R_{0,n}^{tn^{2/3}}$ denotes the parallelogram spanned by the four corners $(0,0)\pm(tn^{2/3}, -tn^{2/3})$ and $ (n,n) \pm(tn^{2/3}, -tn^{2/3})$. We define the probabilities of two events as follows:
\begin{align*}(\textup{Mid}) &= \mathbb{P}  \big(|  \mathbf{e}_1 \cdot \mathbf v^{\textup{max}}_{n/2} (G_{0,n}) - n/2| > tn^{2/3}\big)\\
(\textup{Tube}) &= \mathbb{P}\big(\textup{the geodesic of $G_{(0,0), (n,n)}$ exits the tube } R_{0,n}^{tn^{2/3}}\big).
\end{align*}
By definition, $(\textup{Mid})  \leq (\textup{Tube})$.
\begin{itemize}
\item \textbf{Upper bounds:} 
An initial polynomial upper bound of $Ct^{-3}$ for (Mid) was established in \cite[Theorem 2.5]{poly2}. Subsequently, exponential upper bounds of $e^{-Ct}$ and $e^{-Ct^3}$ were derived in \cite[Lemma 11.3]{slowbondproblem} and \cite[Lemma C.10]{timecorrflat}, respectively. The exponential upper bound for (Mid) is then leveraged through an iterative argument to obtain an upper bound for $(\textup{Tube})$, see \cite[Lemma 11.1]{slowbondproblem}  and \cite[Lemma C.8]{timecorrflat}.
Thus, it holds that 
$$(\textup{Mid})  \leq (\textup{Tube}) \leq e^{-Ct^3}.$$
\item \textbf{Lower bounds:} 
The  existing lower bound, as established in \cite[Proposition 1.4]{ham-sar-20}, asserts that $(\textup{Tube})$ is lower bounded by 
$e^{-C't^3}.$ The key argument there is to construct a rare event where a path exiting outside the tube $R^{tn^{2/3}}_{0,n}$ possesses a greater passage value than the passage values of all paths that lie entirely inside  $R^{tn^{2/3}}_{0,n}$. This, in turn, implies that the geodesic must exit the tube $R^{tn^{2/3}}_{0,n}$. However, this argument does not provide information about the specific location of the large transversal fluctuation, rendering it inapplicable to (Mid).
To complement this, our result addresses the final missing piece in the tail bounds, as now we also have 
$$e^{-C't^3} \leq (\textup{Mid}) \leq (\textup{Tube}).$$
\end{itemize}

In addition to the global transversal fluctuation, another critical aspect involves studying the local transversal fluctuation of the geodesic near its starting point. Given $r \ll n$, the focus in this context is on the point where the geodesic between $(0,0)$ and $(n,n)$ intersects the line $x+y = r$, measuring its deviation from $(r,r)$ on the scale of $r^{2/3}$. This is analogous to (Mid). A corresponding probability to (Tube) considers the segment of the geodesic from $(0,0)$ to just before it crosses $x+y = r$, measuring its deviation from the diagonal line.

Probability upper bounds for the local fluctuation analogous to (Mid) and (Tube) were derived in \cite[Theorem 3]{ubcoal} and \cite[Proposition 2.1]{balazs2023geodesic} respectively. The available lower bound again only pertains to the rare event corresponding to (Tube), appearing in \cite{aga-23}. As mentioned in \cite[Remark 5]{aga-23}, the argument used there is similar to \cite{ham-sar-20}, thus the rare event can not precisely identify the location of the large fluctuation. As mentioned before, our result also addresses this limitation.

In addition to these tail bounds, an exact formula for the probability of the LPP geodesics passing through a given point was derived in \cite{liu-geo}. However, as pointed out by
the author, it has remained difficult to extract tail bounds from the formula due to its complicated form. In addition, the corresponding exact formula is not currently known for any positive temperature KPZ models, such as the inverse-gamma polymer addressed in our paper.

Furthermore, versions of the lower bound have been obtained in the semi-infinite setting for both the CGM and the inverse-gamma polymer \cite{ ras-sep-she-, seppcoal}, using coupling methods. This approach forms the basis of our paper, and the novelty lies in incorporating the dual semi-infinite geodesics/polymers into the picture and extending previous arguments to control both the primal and dual geodesics/polymers simultaneously. This, in turn, enables us to derive the result for finite geodesics/polymers as stated in Theorem \ref{main1} and Theorem \ref{main2}.

A few months after we posted the first version of this paper, another related work \cite{agarwal2024sharp} identified a precise expression for (Mid), including the leading order constant in the exponential power. Although our results do not identify this constant, we highlight two key differences that make our results remain significant:

\begin{itemize}
\item The leading constant in the exponential found in \cite{agarwal2024sharp} applies to the midpoint of the geodesic, when \(r = n/2\). The explicit constant for positions away from the midpoint remains unknown (see Section 1.1 of \cite{agarwal2024sharp}). Our bound applies to any fixed level, i.e.\ \(r_0 \leq r \leq n-r_0\).

\item Our results also cover the positive temperature setting, i.e.\ the inverse-gamma polymer, whereas the percolation method from \cite{agarwal2024sharp} was developed for the CGM.
\end{itemize}

\section{Preliminaries}
This section covers aspects of the models used in our proof. Section \ref{sec_stat} introduces the increment-stationary versions of the CGM and the inverse-gamma polymer, and we recall important known results, such as the variance and the exit time bounds. Section \ref{semiinf} and Section \ref{Bus+Poly} introduce the Busemann function in the CGM and inverse-gamma polymer.  They are then used to study the primal and dual semi-infinite geodesics/polymers. Additionally, we recall two known results about the disjointedness of the primal and dual semi-infinite geodesics/polymers and their connections to the increment-stationary models.


\subsection{Increment-stationary models}\label{sec_stat}

We start by defining an increment-stationary LPP process starting from a horizontal boundary, which is a model defined on a half space $\mathbb{Z}\times \mathbb{Z}_{\geq 0}$ instead of the entire $\mathbb{Z}^2$. Without the loss of generality, we will fix the starting point to be the origin, and the horizontal boundary is located along the horizontal line $y=0$. 

Let $\{\omega_{\mathbf z}\}_{\mathbf z\in \mathbb{Z}\times \mathbb{Z}_{>0}}$ be a collection of i.i.d.\ $\Exp(1)$ distributed random variables. Fix $\rho \in (0,1)$,  let $\{Y_{j}\}_{j\in \mathbb{Z}}$ be a collection of i.i.d.\ $\Exp(\rho)$ distributed random variables, and they will serve as the boundary weights. It will be helpful to think of the boundary weights $Y_j$ as being attached to the unit edges $[\![(j-1,0), (j, 0)]\!]$ along the line \(y=0\). With $\{Y_{j}\}_{j\in \mathbb{Z}}$ given, define $\{h_i\}_{i \in \mathbb{Z}}$ to be $h_0 = 0$ and 
$$h_i = \begin{cases}
\sum_{j=1}^i Y_j \qquad & \textup{ if } i \geq 1\\
\sum_{j=0}^{|i|-1} -Y_{-j} \qquad & \textup{ if } i \leq -1.
\end{cases}
$$
Recall the bulk last-passage value $G$ defined in \eqref{sec2G}, then the last-passage value starting from $\Exp(\rho)$ weights along the horizontal boundary will be 
\begin{equation}\label{Gh}
G^{\rho}_{\mathbf 0, \mathbf x} = \begin{cases} \max_{i \in \mathbb{Z}} \big\{h_i + G_{(i,1), \mathbf x}\big\} \qquad & \textup{ if } \mathbf{x} \cdot \mathbf{e}_2 \geq 1\\
h_{\mathbf{x} \cdot \mathbf{e}_1}\qquad & \textup{ if } \mathbf{x} \cdot \mathbf{e}_2 = 0.
\end{cases}
\end{equation}
We will view the unique geodesic in this model as an upright path starting from the horizontal line $y=0$ and ending at $\mathbf{x}$, i.e. an element of  $\cup_{k\in \mathbb{Z}} \mathbb{X}_{(k,0), \mathbf{x}}$.

The increment-stationary polymer is defined in a very similar fashion. Fix $\rho \in (0,1)$, let $\{Y_{j}\}_{j\in \mathbb{Z}}$ be a collection of i.i.d.\ $\text{Ga}^{-1}(\rho)$ distributed random variables. Define $h_0 = 1$ and 
$$h_i = \begin{cases}
\prod_{j=0}^i Y_{j} \qquad & \textup{ if } i \geq 1\\
\prod_{j=0}^{|i|-1} Y_{-j}^{-1} \qquad & \textup{ if } i \leq -1.
\end{cases}
$$
The free energy with $\text{Ga}^{-1}(\rho)$ distributed 
 boundary  weights is defined by 
\begin{equation}\label{free}
\log Z^{\rho}_{\mathbf 0,\mathbf x} = 
\begin{cases}
\log \Big(\sum_{i \in \mathbb{Z}} h_i \cdot Z_{(i,1), \mathbf x}\Big) \qquad &\textup{ if } \mathbf{x}\cdot \mathbf{e}_2 \geq 1\\
\log h_{ \mathbf{x}\cdot \mathbf{e}_1}\qquad &\textup{ if } \mathbf{x}\cdot \mathbf{e}_2 =0\\
\end{cases}
\end{equation}
The quenched polymer measure in this model can be viewed as a probability measure on $\cup_{k\in \mathbb{Z}} \mathbb{X}_{(k,0), \mathbf{x}}$. For a particular up-right path ${\boldsymbol\gamma}$ starting at $(k,0)$, the quenched polymer measure is defined by  $$Q^\rho_{(0,0), \mathbf{x}}({\boldsymbol\gamma}) = \frac{ h_k \cdot \prod_{\mathbf{z}\in \boldsymbol\gamma} \omega_{\mathbf z} }{Z^{\rho}_{\mathbf 0,\mathbf x}}.$$

\begin{remark}
In the past, the increment-stationary models have often been defined with a southwest boundary, aligned along the $\mathbf{e}_1$- and $\mathbf{e}_2$-axes, rather than a horizontal boundary. Notably, there is no real difference as there is a coupling between the two definitions such that that the last-passage times or the free energies from $(0,0)$ to $\mathbf{x} \in \mathbb{Z}{\geq 0}$ are identical. More generally, one could actually define the increment-stationary models with its boundary along any down-right path, as appeared in \cite[Appendix B]{opt_exit} and \cite[Section 5]{coalnew}.
\end{remark}

\begin{remark}

In the given definition, the horizontal boundary is situated on the south side. In subsequent proofs, we will also employ a rotated version with the boundary on the north side. The last-passage value and free energy for this scenario will be denoted as $G^{\textup{N}, \rho }_{(0,0), (n,n)}$ or $\log Z^{ \textup{N}, \rho}_{(0,0), (n,n)}$, respectively. The superscript ``N" signifies north, as the horizontal boundary aligns with the line $y=n$.
\end{remark}

We record the expectations of the last-passage value and the free energy for the increment-stationary models below. For the CGM, 
\begin{equation}\label{stat_expect}
\mathbb{E}^\rho\Big[G^\rho_{{(0,0)}, (m,n)}\Big] = \frac{m}{\rho} + \frac{n}{1-\rho},
\end{equation}
and for reference, see (3.24) in the arXiv version of \cite{CGMlecture}.
For the inverse-gamma polymer, let $\Psi_0$ denote the digamma function,
\begin{equation}\label{expect_stat}
\mathbb{E}\Big[\log Z^\rho_{(0,0), (m,n)} \Big] = - m\Phi_0(\rho) -n \Phi_0(1-\rho),
\end{equation}
for reference, see (2.5) in the arXiv version of \cite{poly2}.

The subsequent variance bounds have been presented as Theorem 5.1 and Lemma 5.7 in the arXiv version of \cite{CGMlecture} for the CGM, and as Theorem 2.1 and Lemma 4.1 in the arXiv version of \cite{poly2} for the inverse-gamma polymer. We include them here for reference.

\begin{proposition}[{\cite{poly2, CGMlecture}}]\label{stat_var}
Let $0< \epsilon_0 < 1/2$. Then there exists a constant $C_1$ depending only on $\epsilon_0$ such that for $0< \rho - 1/2 \leq \epsilon_0$ and each $|\mathbf x- (n,n)|_1 \leq 1$
\begin{align*}
\Var\Big[G^\rho_{(0,0), \mathbf x}\Big] &\leq \Var\Big[G^{1/2}_{(0,0), \mathbf x}\Big] + C_1(\rho - 1/2)n\\
\Var\Big[\log Z^\rho_{(0,0), \mathbf x}\Big] &\leq \Var\Big[\log Z^{1/2}_{(0,0), \mathbf x}\Big] + C_1(\rho - 1/2)n.
\end{align*}
In addition, there exists an absolute constant $C_2$ such that 
\begin{align*}
\Var\Big[G^{1/2}_{(0,0), \mathbf x}\Big] &\leq C_2n^{2/3}\\
\Var\Big[\log Z^{1/2}_{(0,0), \mathbf x}\Big] &\leq C_2n^{2/3}
\end{align*}
\end{proposition}

In the increment-stationary model, it is observed that the geodesic or sampled polymer path tends to favor remaining on the boundary. However, for each $\rho \in (0,1)$, there exists a unique direction where the attraction effect is evenly balanced on the left and right sides of the origin. This is known as the \textit{characteristic direction}. In the increment-stationary LPP model, it is defined as 
 \begin{equation}\label{char_dir}
 \boldsymbol\xi[\rho] = (\rho^2, (1-\rho)^2).
 \end{equation}
For the increment-stationary polymer, let 
$\Psi_1$ be the trigamma functions and the characteristic direction is given by 
\begin{equation}\label{char_dir1}
\boldsymbol\xi[{{\rho}}] =  ({\Psi_1(1-{\rho})},{\Psi_1({{\rho}})}). 
\end{equation}
Note above, we use the same notation $\boldsymbol{\xi}[\rho]$, as it will be clear based on the model being discussed.

\begin{figure}[t]
\begin{center}

\tikzset{every picture/.style={line width=0.75pt}} 

\begin{tikzpicture}[x=0.75pt,y=0.75pt,yscale=-1,xscale=1]

\draw [color={rgb, 255:red, 155; green, 155; blue, 155 }  ,draw opacity=1 ][fill={rgb, 255:red, 155; green, 155; blue, 155 }  ,fill opacity=1 ]   (19.3,210.03) -- (241.7,210.03) ;
\draw  [fill={rgb, 255:red, 0; green, 0; blue, 0 }  ,fill opacity=1 ] (98.1,210.08) .. controls (98.1,209.01) and (98.97,208.13) .. (100.05,208.13) .. controls (101.13,208.13) and (102,209.01) .. (102,210.08) .. controls (102,211.16) and (101.13,212.03) .. (100.05,212.03) .. controls (98.97,212.03) and (98.1,211.16) .. (98.1,210.08) -- cycle ;
\draw  [fill={rgb, 255:red, 0; green, 0; blue, 0 }  ,fill opacity=1 ] (198.5,110.08) .. controls (198.5,109.01) and (199.37,108.13) .. (200.45,108.13) .. controls (201.53,108.13) and (202.4,109.01) .. (202.4,110.08) .. controls (202.4,111.16) and (201.53,112.03) .. (200.45,112.03) .. controls (199.37,112.03) and (198.5,111.16) .. (198.5,110.08) -- cycle ;
\draw [color={rgb, 255:red, 128; green, 128; blue, 128 }  ,draw opacity=1 ] [dash pattern={on 0.84pt off 2.51pt}]  (200.45,110.08) -- (9.82,239.38) ;
\draw [shift={(8.17,240.5)}, rotate = 325.85] [color={rgb, 255:red, 128; green, 128; blue, 128 }  ,draw opacity=1 ][line width=0.75]    (10.93,-3.29) .. controls (6.95,-1.4) and (3.31,-0.3) .. (0,0) .. controls (3.31,0.3) and (6.95,1.4) .. (10.93,3.29)   ;
\draw [color={rgb, 255:red, 155; green, 155; blue, 155 }  ,draw opacity=1 ][line width=4.5]    (32.1,210.03) -- (74.1,210.03) ;
\draw [color={rgb, 255:red, 0; green, 0; blue, 0 }  ,draw opacity=1 ][line width=2.25]  [dash pattern={on 6.75pt off 4.5pt}]  (100.05,210.08) -- (44.1,210.03) ;
\draw [color={rgb, 255:red, 0; green, 0; blue, 0 }  ,draw opacity=1 ][line width=1.5]  [dash pattern={on 5.63pt off 4.5pt}]  (44.1,210.03) .. controls (46.1,155.23) and (172.1,170.43) .. (200.45,110.08) ;

\draw (86.5,215.03) node [anchor=north west][inner sep=0.75pt]    {$(0,0)$};
\draw (209.6,101.13) node [anchor=north west][inner sep=0.75pt]    {$( n,n)$};
\draw (5.2,242.73) node [anchor=north west][inner sep=0.75pt]    {$-\boldsymbol{\xi}[\rho]$-directed};

\end{tikzpicture}
\captionsetup{width=.8\linewidth}
\caption{
An illustration of Proposition \ref{exit2} and Proposition \ref{exit3}: Given the $\rho$-boundary, the geodesic/polymer between $(0,0)$ and $(n,n)$, shown as the black dotted line,  is highly likely to adhere to the boundary and exit near the intersection point of the $(-\boldsymbol\xi[\rho])$-directed ray from $(n,n)$ and the $\mathbf{e}_1$-axis.} \label{fig4}
\end{center}
\end{figure}
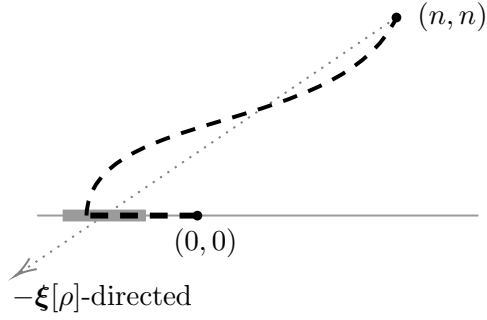

A consequence of this is that the geodesic or sampled polymer paths from $(0,0)$ to $n\boldsymbol\xi[\rho]$ spend order  $n^{2/3}$ number of steps on the boundary before taking an $\mathbf{e}_2$ step into the bulk. This is summarized in Proposition \ref{exit2} and Proposition \ref{exit3} below, and for an illustration, refer to Figure \ref{fig4}.
In the increment-stationary LPP,  the following proposition is based on \cite[Lemma 4.2]{seppcoal}.
\begin{proposition}\label{exit2}
Fix $\rho \in (0,1)$, let $i^{\rho}_*$ denote the maximizing index of $G^\rho_{(0,0), (n,n)}$, and  let $u^\rho$ denote the $\mathbf{e}_1$-coordinate of the intersection point of the $(-\boldsymbol\xi[\rho])$-directed ray from $(n,n)$ with the $\mathbf{e}_1$-axis. For each $\epsilon_0 >0$, there exist positive constants $C, n_0$ such that for $n\geq n_0$ and $t\geq 1$,
$$\mathbb{P}\Big(i^{\rho}_* \in \big({u^\rho} - \epsilon_0 tn^{2/3},{u^\rho} + \epsilon_0 tn^{2/3}\big)\Big) \geq 1-e^{-Ct^3}.$$
\end{proposition}

The version of the result for the increment stationary inverse-gamma polymer is stated below, based on \cite[Lemma 4.12]{ras-sep-she-}. 
\begin{proposition}\label{exit3}
Fix $\rho \in (0,1)$, let $u^\rho$ denote the $\mathbf{e}_1$-coordinate of the intersection point of the $(-\boldsymbol\xi[\rho])$-directed ray from $(n,n)$ with the $\mathbf{e}_1$-axis. For each $\epsilon_0 >0$, there exist positive constants $C_1, C_2 , n_0$ such that for $n\geq n_0$ and $t\geq 1$,
$$\mathbb{P}\Big(Q^\rho_{(0,0), (n,n)}\Big( \bigcup_{k\in ({u^\rho} - \epsilon_0 tn^{2/3},{u^\rho} + \epsilon_0 tn^{2/3})} \mathbb{X}_{(k,0), (n,n)}\Big) \geq 1-e^{-C_1 t^2n^{1/3}}\Big) \geq 1-e^{-C_2t^3}.$$
\end{proposition}

\subsection{Busemann functions in the CGM}\label{semiinf}

We start by defining the semi-infinite geodesics in the CGM, which is the model defined on $\mathbb{Z}^2$ without any boundary. A semi-infinite up-right path $(\mathbf 
 z_i)_{i=0}^\infty$ is a \textit{semi-infinite geodesic} if any of its finite subpaths is a geodesic, that is,  
$$\forall k<l \text{ in } \Z_{\geq0}, \text{ it holds that  }G_{\mathbf z_k, \mathbf z_l}= \sum_{i=k}^l \omega_{\mathbf z_i}.$$
For a direction $\boldsymbol\xi\in\R^2_{\ge0}\setminus\{(0,0)\}$, the semi-infinite path $(\mathbf z_i)_{i=0}^\infty$ is $\boldsymbol\xi$-\textit{directed} if $\mathbf z_i/|\mathbf z_i|_1 \rightarrow \boldsymbol \xi/|\boldsymbol\xi|_1$ as $i\rightarrow \infty$. 
In the CGM, it is natural to index 
 spatial directions $\boldsymbol\xi$  by the characteristic directions $\boldsymbol \xi[\rho]$, as defined in \eqref{char_dir}.
Following the works \cite{multicoupier, buse3, coalnew}, almost surely every semi-infinite geodesic has an asymptotic direction, and for each fixed direction $\boldsymbol\xi[\rho]$ and $\mathbf x\in\Z^2$, there is a unique $\boldsymbol\xi[\rho]$-directed semi-infinite geodesic $\bgeod{\,\rho}{\mathbf x} =  \left(\bgeod{\,\rho}{\mathbf x}_i\right)_{i=0}^\infty$ such that  $\bgeod{\,\rho}{\mathbf x}_0=\mathbf x$.

Our next proposition defines and summarizes an important property of the Busemann function in the CGM. For reference, see \cite[Theorem 3.7]{seppcoal}.
\begin{proposition} [{\cite{seppcoal}}]\label{t:buse}   Fix $\rho\in(0,1)$.  There exists a process $\{B^\rho_{\mathbf x, \mathbf y}\}_{\mathbf x, \mathbf y\in\Z^2}$, called the Busemann process, which has the following properties. 
\begin{enumerate} [{\rm(i)}] 

\item  With probability one,  for each $\mathbf x, \mathbf y \in \Z^2$, 
$$B^\rho_{\mathbf x,\mathbf y} = \lim_{n\rightarrow \infty} \bigl( G_{\mathbf x, \mathbf u_n} - G_{\mathbf y, \mathbf u_n}\bigr) 
$$
for any sequence $\mathbf u_n$ such that $\abs{\mathbf u_n}_1\to\infty$ and  $\mathbf u_n/|\mathbf u_n|_1 \rightarrow \boldsymbol\xi[\rho]/|\boldsymbol \xi[\rho]|_1$ as $n\rightarrow \infty$.

\item  Define the  dual weights by 
$$\text{$\widecheck{\omega}^\rho_{\mathbf{z}} = B^\rho_{\mathbf{z}-\mathbf{e}_1, \mathbf{z}}\wedge B^\rho_{\mathbf{z}-\mathbf{e}_2, \mathbf{z}}$ \ \ \ for $\mathbf{z} \in \Z^2$.} $$ 
 Fix a  bi-infinite nearest-neighbor  down-right path ${\boldsymbol \pi} = (\boldsymbol \pi_i)_{i\in\Z}$ on $\Z^2$. 
 This means that $\boldsymbol \pi_{i+1}- \boldsymbol \pi_i\in\{\mathbf{e}_1,  -\mathbf{e}_2\}$.  Then the random variables 
 \begin{align*}  
 &\Big\{ B^\rho_{\boldsymbol \pi_i, \boldsymbol \pi_{i+1}}: i\in\Z \Big\}, \ 
  \Big\{\omega_{\mathbf y}: 
\text{$\mathbf y\in\Z^2$ lies strictly to the left of and below ${\boldsymbol \pi}$} \Big\}, \\
&\qquad \text{and} \quad 
 \Big\{\widecheck\w^\rho_{\mathbf{z}}: 
\text{$\mathbf z\in\Z^2$ lies strictly to the right of and above ${\boldsymbol \pi}$} \Big\}
\end{align*} 
  are all mutually independent with  marginal distributions 
\be\label{buse78}  B^\rho_{\mathbf x, \mathbf x+ \mathbf e_1}\sim \Exp(\rho),
\quad  
B^\rho_{\mathbf x, \mathbf x+\mathbf e_2}\sim \Exp(1-\rho) \quad\text{and}\quad \omega_{\mathbf y}, \;\widecheck{\omega}^\rho_{\mathbf z}\sim \Exp(1).
\ee
\end{enumerate} 
\end{proposition}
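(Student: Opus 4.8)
\emph{Proof idea.} Proposition~\ref{t:buse} packages the almost sure existence of the directed Busemann function in the characteristic direction $\boldsymbol\xi[\rho]$ together with the Burke-type independence structure of its increments along down-right paths. The plan is to realize $B^\rho$ as the almost sure limit of the increment-stationary process $G^\rho$ of Section~\ref{sec_stat} as its boundary recedes to infinity in direction $\boldsymbol\xi[\rho]$: property (ii) then becomes the limiting form of the Burke property of $G^\rho$, while property (i) comes from sandwiching bulk last-passage increments between stationary ones.

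First I would record the \textbf{Burke property} of the increment-stationary model $G^\rho$. Writing $I_{\mathbf z}=G^\rho_{\mathbf 0,\mathbf z}-G^\rho_{\mathbf 0,\mathbf z-\mathbf e_1}$ and $J_{\mathbf z}=G^\rho_{\mathbf 0,\mathbf z}-G^\rho_{\mathbf 0,\mathbf z-\mathbf e_2}$, the last-passage recursion yields the local update $I_{\mathbf z}=\omega_{\mathbf z}+(I_{\mathbf z-\mathbf e_2}-J_{\mathbf z-\mathbf e_1})^+$ and $J_{\mathbf z}=\omega_{\mathbf z}+(J_{\mathbf z-\mathbf e_1}-I_{\mathbf z-\mathbf e_2})^+$, with the ``dual'' weight $\widecheck\omega_{\mathbf z}:=I_{\mathbf z}\wedge J_{\mathbf z}$ read off at each vertex. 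The engine is a one-vertex reversibility lemma: if the two input increments and $\omega_{\mathbf z}$ are independent with laws $\Exp(\rho)$, $\Exp(1-\rho)$, $\Exp(1)$, then the two output increments together with $\widecheck\omega_{\mathbf z}$ are again independent with laws $\Exp(\rho)$, $\Exp(1-\rho)$, $\Exp(1)$ --- a direct computation with exponential random variables, the discrete analogue of Burke's theorem for the M/M/1 queue. Starting from the coordinate axes, where the increments are i.i.d.\ and independent of the bulk weights by construction, one flips the convex corners of a finite down-right path one at a time, invoking the one-vertex lemma at each flip; this propagates independence and the marginals in \eqref{buse78} to every finite down-right path, and then, by exhausting a bi-infinite path with finite ones, to bi-infinite down-right paths. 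The conclusion is exactly the independence/marginal statement of (ii), stated for $G^\rho$.

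Next I would pass to the \textbf{Busemann limit}. Fix base points $\mathbf x,\mathbf y$ and a sequence $\mathbf u_n$ with $\mathbf u_n/|\mathbf u_n|_1\to\boldsymbol\xi[\rho]/|\boldsymbol\xi[\rho]|_1$. The key input is that an increment-stationary process of parameter $\rho$ based behind $\mathbf x$ and $\mathbf y$ is a faithful proxy for the bulk $G$: by exit-time concentration of the type in Proposition~\ref{exit2}, the geodesic attaining the stationary passage value to $\mathbf u_n$ leaves its boundary within $o(|\mathbf u_n|_1)$ of the characteristic ray, so the stationary and bulk increments between lattice points near $\mathbf x,\mathbf y$ agree on an event of probability tending to one. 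Combining this with monotonicity of the increment process in $\rho$ (to trap the bulk increment between parameters $\rho-\delta$ and $\rho+\delta$) and the variance and expectation control of Proposition~\ref{stat_var} and \eqref{stat_expect}, one shows that $G_{\mathbf x,\mathbf u_n}-G_{\mathbf y,\mathbf u_n}$ converges almost surely, the limit $B^\rho_{\mathbf x,\mathbf y}$ being independent of the choice of $\mathbf u_n$; this is (i). Moreover the limiting increment field has exactly the law of the stationary increment cocycle, so the Burke property of the previous paragraph survives the limit: the increments of $B^\rho$ along a down-right path $\gamma$ are independent with the marginals in \eqref{buse78}, and the limiting dual weights, which by the local update above are precisely $\widecheck\omega^\rho_{\mathbf z}=B^\rho_{\mathbf z-\mathbf e_1,\mathbf z}\wedge B^\rho_{\mathbf z-\mathbf e_2,\mathbf z}$, are i.i.d.\ $\Exp(1)$ and independent of the $\omega$'s on the other side of $\gamma$. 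This is (ii).

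The step I expect to be the main obstacle is the almost sure convergence of the Busemann limit: this is where the quantitative KPZ-scale estimates really enter, and one must run the sandwiching uniformly over the pairs $\mathbf x,\mathbf y$ (so as to obtain a \emph{process}, not merely pointwise limits along fixed sequences) and argue that the resulting object does not depend on the approximating sequence $\mathbf u_n$. A secondary technical point is making the corner-flip induction of the Burke property cohabit cleanly with the passage to bi-infinite down-right paths on all of $\mathbb Z^2$.
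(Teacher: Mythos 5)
This proposition is not proved in the paper at all: it is imported verbatim from the cited reference (\cite{seppcoal}, Theorem 3.7), so there is no in-paper argument to compare yours against. Judged against the way this result is actually established in the literature the paper leans on, your outline is essentially the standard route and is correct in structure: part (ii) is indeed obtained by the one-vertex reversibility lemma for $(\Exp(\rho),\Exp(1-\rho),\Exp(1))$ triples, propagated by flipping convex corners of finite down-right paths and then exhausting a bi-infinite path, and part (i) is obtained by comparing bulk increments with increment-stationary ones.

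The one place where your sketch, as written, would not close is the almost-sure Busemann limit in (i). Exit-time concentration plus the variance bound of Proposition \ref{stat_var} gives you agreement of bulk and stationary increments on events of probability tending to one, i.e.\ convergence in probability along a fixed sequence $\mathbf u_n$; it does not by itself produce an almost sure limit valid simultaneously for \emph{every} sequence with direction converging to $\boldsymbol\xi[\rho]/|\boldsymbol\xi[\rho]|_1$, nor the independence of the limit from the sequence. The known proofs supply this missing step in one of two ways: either through uniqueness and coalescence of the $\boldsymbol\xi[\rho]$-directed semi-infinite geodesics (so that increments $G_{\mathbf x,\mathbf u_n}-G_{\mathbf y,\mathbf u_n}$ literally stabilize once the two geodesics have coalesced before $\mathbf u_n$), or through the deterministic path-crossing (comparison) lemma, which sandwiches the bulk increment monotonically between stationary increments at parameters $\rho\pm\delta$, combined with the law of large numbers for the stationary increment process and continuity in $\rho$, then letting $\delta\downarrow0$ along a countable set. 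You gesture at the second mechanism (``monotonicity of the increment process in $\rho$''), but in your write-up the load is carried by variance/exit-time estimates, which are neither necessary nor sufficient for the a.s.\ statement; you should make the crossing-lemma sandwich plus LLN (or coalescence) the central step, after which the Burke structure of (ii) passes to the limit exactly as you describe.
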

To establish the connection between the Busemann function and the semi-infinite geodesics, it turns out that the unique $\boldsymbol\xi[\rho]$-directed semi-infinite geodesic from $\mathbf x$ can also be defined as below. Let $\bgeod{\rho}{\mathbf x}_0= \mathbf x$, and for $k\ge 0$, \beq\label{busegeo}  
\bgeod{\,\rho}{x}_{k+1}=\begin{cases}   \bgeod{\rho}{\mathbf x}_{k} + \mathbf e_1, &\text{if } \ B^\rho_{\bgeod{\rho}{\mathbf x}_{k},\bgeod{\rho}{\mathbf x}_{k} + \mathbf e_1} \le  B^\rho_{\bgeod{\rho}{\mathbf x}_{k},\bgeod{\rho}{\mathbf x}_{k} + \mathbf e_2}
\\[5pt]  
 \bgeod{\rho}{\mathbf  x}_{k} + \mathbf  e_2, &\text{if } \   B^\rho_{\bgeod{\rho}{\mathbf  x}_{k},\bgeod{\rho}{\mathbf  x}_{k} + \mathbf  e_2} <  B^\rho_{\bgeod{\rho}{\mathbf  x}_{k},\bgeod{\rho}{\mathbf x}_{k} + \mathbf  e_1}. 
\end{cases} 
  \eeq
In addition to these, we can also define semi-infinite paths in the southwest direction using the Busemann functions. Let $\mathbf{b}^{{\rm sw},\rho, \mathbf x}_0=\mathbf  x,$ and for $k\ge 0$
  \beq\label{bg16} \begin{aligned}  
\mathbf{b}^{{\rm sw},\rho, \mathbf x}_{k+1}&=\begin{cases}   \mathbf{b}^{{\rm sw},\rho, \mathbf x}_{k} - \mathbf e_1, &\text{if } \ B^\rho_{\mathbf{b}^{{\rm sw},\rho, \mathbf x}_{k}-\mathbf e_1,\,\mathbf{b}^{{\rm sw},\rho, \mathbf x}_{k}}\le   B^\rho_{\mathbf{b}^{{\rm sw},\rho, \mathbf x}_{k}-\mathbf e_2,\,\mathbf{b}^{{\rm sw},\rho, \mathbf x}_{k}}
\\[6pt]  
 \mathbf{b}^{{\rm sw},\rho, \mathbf x}_{k} - \mathbf e_2, &\text{if } \   B^\rho_{\mathbf{b}^{{\rm sw},\rho, \mathbf x}_{k}-\mathbf e_2,\,\mathbf{b}^{{\rm sw},\rho, \mathbf x}_{k}} <   B^\rho_{\mathbf{b}^{{\rm sw},\rho, \mathbf x}_{k}-\mathbf e_1,\,\mathbf{b}^{{\rm sw},\rho, \mathbf x}_{k}} . 
\end{cases} 
\end{aligned} \eeq
Note that $\mathbf{b}^{{\rm sw},\rho, \mathbf x}$ is, in fact, the unique semi-infinite geodesic in the $(-\boldsymbol\xi[\rho])$-direction for the dual environment ${\wc \omega^\rho_{\mathbf{z}\in \mathbb{Z}^2}}$, as shown in \cite[Theorem 5.1]{seppcoal}. We will simply refer to them as southwest semi-infinite geodesics.

Proposition \ref{inf_stat} below connects these two kinds of semi-infinite geodesics to the geodesics of two increments-stationary LPP processes that share the same boundary, which we define below.
Fix $\rho \in (0,1)$, let $Y_j = B^\rho_{(j-1,0), (j, 0)}$ be the boundary weights along the $\mathbf{e}_1$-axis. Together with the bulk weights below the $\mathbf{e}_1$-axis, $\{\omega_{\mathbf{z}}\}_{\mathbf{z}\in \mathbb{Z}\times \mathbb{Z}_{<0}}$, we may define an increment-stationary LPP with its boundary on the north side: let $\mathbf{x}\in \mathbb{Z}\times \mathbb{Z}_{<0}$ and denote its last-passage value as $G^{\textup{N},\rho}_{\mathbf{x},(0,0)}$.
In addition to this, with the dual weights above the $\mathbf{e}_1$-axis,  $\{\wc \omega^\rho_{\mathbf{z}}\}_{\mathbf{z}\in \mathbb{Z}\times \mathbb{Z}_{>0}}$, defined in Proposition \ref{t:buse} (ii), we may define another increment-stationary LPP with south boundary:  $G^\rho_{(0,0), \mathbf{y}}$ for $\mathbf{y} \in \mathbb{Z}\times \mathbb{Z}_{>0}$. 
For an illustration of the proposition, see Figure \ref{fig2}, and this result has appeared previously in  \cite[Proposition 5.2.]{seppcoal}.

\begin{proposition}\label{inf_stat}
Fix $\mathbf{x} \in  \mathbb{Z}\times \mathbb{Z}_{<0}$, the edges of the semi-infinite geodesic  $\mathbf{b}^{\rho, \mathbf{x}}$ with at least one endpoint in  $\mathbb{Z}\times \mathbb{Z}_{<0}$  are also edges of the  geodesic of  $G^{\textup{N},\rho}_{\mathbf{x}, (0,0)}$. 
Similarly, fix $\mathbf{y} \in  \mathbb{Z}\times \mathbb{Z}_{>0}$, the edges  of the southwest semi-infinite geodesic  $\mathbf{b}^{\textup{sw},\rho, \mathbf{y}}$ with at least one endpoint in  $\mathbb{Z}\times \mathbb{Z}_{>0}$  are also edges of the geodesic of  $G^{\rho}_{(0,0), \mathbf{y}}$. 
\end{proposition}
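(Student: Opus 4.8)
The plan is to prove the first statement (for $\mathbf{b}^{\rho,\mathbf x}$ and $G^{\rho,\textup{N}}_{\mathbf x,\mathbf 0}$); the second follows by the same argument applied to the dual environment $\{\wc\omega^\rho_{\mathbf z}\}$, using that $\mathbf{b}^{\textup{sw},\rho,\mathbf y}$ is the $-\boldsymbol\xi[\rho]$-directed semi-infinite geodesic in that environment (as recorded after \eqref{bg16}) and the reflection symmetry of the lattice. So fix $\mathbf x\in\mathbb Z\times\mathbb Z_{<0}$. The key observation is that the recursion \eqref{busegeo} defining the semi-infinite geodesic $\mathbf b^{\rho,\mathbf x}$ is, step for step, the dynamic-programming recursion for the increment-stationary LPP $G^{\rho,\textup N}_{\cdot,\mathbf 0}$: in the north-boundary model the value satisfies $G^{\rho,\textup N}_{\mathbf z,\mathbf 0}=\omega_{\mathbf z}\vee(\text{boundary term})$ built from the increments along the $\mathbf e_1$-axis, and crucially the Busemann increments $B^\rho_{\mathbf z,\mathbf z+\mathbf e_1}$, $B^\rho_{\mathbf z,\mathbf z+\mathbf e_2}$ are exactly the increments of $G^{\rho,\textup N}_{\cdot,\mathbf 0}$ in the region $\mathbb Z\times\mathbb Z_{\le 0}$. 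Concretely, I would first establish the identity
$$
B^\rho_{\mathbf z,\mathbf z+\mathbf e_1}=G^{\rho,\textup N}_{\mathbf z+\mathbf e_1,\mathbf 0}\ \text{``}-\text{''}\ G^{\rho,\textup N}_{\mathbf z,\mathbf 0}\quad\text{and similarly for }\mathbf e_2,
$$
i.e.\ that the Busemann function restricted to the lower half-plane coincides with the LPP potential $-G^{\rho,\textup N}_{\cdot,\mathbf 0}$ up to the additive normalization along the boundary, where $Y_j=B^\rho_{(j,0),(j-1,0)}$ by construction. This is where the independence/distribution structure of Proposition \ref{t:buse}(ii), applied to the down-right path that runs along the $\mathbf e_1$-axis, does the work: it guarantees that the weights used to build $G^{\rho,\textup N}_{\cdot,\mathbf 0}$ (bulk $\omega$'s strictly below the axis, boundary $Y_j$'s on the axis) are precisely the quantities the Busemann recursion below the axis is expressed in terms of, so the two potentials agree.

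Granting that potential identity, the second step is purely deterministic: in any LPP model the geodesic from a point $\mathbf z$ to a fixed corner $\mathbf 0$ follows, at each site, the edge of minimal decrement of the value function $G^{\rho,\textup N}_{\cdot,\mathbf 0}$ — it steps to $\mathbf z+\mathbf e_1$ if $G^{\rho,\textup N}_{\mathbf z,\mathbf 0}-G^{\rho,\textup N}_{\mathbf z+\mathbf e_1,\mathbf 0}\le G^{\rho,\textup N}_{\mathbf z,\mathbf 0}-G^{\rho,\textup N}_{\mathbf z+\mathbf e_2,\mathbf 0}$, and to $\mathbf z+\mathbf e_2$ otherwise (with the convention matching \eqref{busegeo} for ties). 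By the potential identity this is exactly the rule \eqref{busegeo}, so as long as $\mathbf b^{\rho,\mathbf x}$ stays in $\mathbb Z\times\mathbb Z_{\le 0}$ its edges coincide with geodesic edges of $G^{\rho,\textup N}_{\mathbf x,\mathbf 0}$; once it leaves the lower half-plane (i.e.\ after it reaches the axis) the claimed conclusion no longer asserts anything. One should also note that $\mathbf b^{\rho,\mathbf x}$, being $\boldsymbol\xi[\rho]$-directed with $\boldsymbol\xi[\rho]$ having positive $\mathbf e_1$- and $\mathbf e_2$-components, a.s.\ does cross the axis in finite time, so there is genuinely a nonempty set of edges with an endpoint in $\mathbb Z\times\mathbb Z_{<0}$, and the statement is not vacuous.

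The main obstacle, and the only subtle point, is the potential identity in the first step: one must check that the finite-size LPP value $G^{\rho,\textup N}_{\mathbf z,\mathbf 0}$ — which is an honest maximum over finitely many up-right paths ending at $\mathbf 0$ — has increments that agree with the Busemann increments $B^\rho$, despite $B^\rho$ being defined as a limit of differences $G_{\cdot,\mathbf u_n}-G_{\cdot,\mathbf u_n}$ of \emph{bulk} passage times toward infinity in direction $\boldsymbol\xi[\rho]$. The clean way to see this is to run the Busemann recursion for the whole-plane environment, split at the $\mathbf e_1$-axis: the Busemann function below the axis satisfies the same additive/min-plus relations that characterize $-G^{\rho,\textup N}_{\cdot,\mathbf 0}$, and both are pinned down on the axis by the same boundary increments $Y_j$, so by uniqueness of the solution to this recursion on the lower half-plane (a straightforward induction on $\ell^1$-distance to $\mathbf 0$) they coincide. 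This is essentially the content of \cite[Theorem 5.1]{seppcoal}, cited in the excerpt, so I would organize the proof to quote that structural fact and then spend the bulk of the writing on the deterministic ``geodesic follows the potential'' step and on confirming the north/south and primal/dual bookkeeping.
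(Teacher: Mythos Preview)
The paper does not give its own proof of this proposition; it is quoted from \cite{seppcoal} (specifically Proposition~5.2 there), so there is no ``paper's proof'' to compare against in the strict sense. That said, your outline is exactly the standard argument used in \cite{seppcoal}: one shows that on the half-plane below (resp.\ above) the axis the Busemann increments $B^\rho_{\mathbf z,\mathbf z+\mathbf e_i}$ coincide with the increments $G^{\rho,\textup N}_{\mathbf z,\mathbf 0}-G^{\rho,\textup N}_{\mathbf z+\mathbf e_i,\mathbf 0}$ of the stationary LPP value (both satisfy the same min-plus recursion with the same boundary data on the axis, and uniqueness is an induction on the $\mathbf e_2$-coordinate), and then the local step rule \eqref{busegeo} for the semi-infinite geodesic is literally the dynamic-programming rule for the geodesic of $G^{\rho,\textup N}_{\mathbf x,\mathbf 0}$. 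Your identification of the potential identity as the only nontrivial step, and of Proposition~\ref{t:buse}(ii) along the $\mathbf e_1$-axis as the input that makes the boundary data match, is correct; the dual statement is indeed obtained by running the same argument in the dual weights $\wc\omega^\rho$. So your proposal is correct and follows the same route as the cited reference.
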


Finally, we state a disjointness result regarding the two collections of semi-infinite geodesics. Let  $\mathbf e^*=\tfrac12(\mathbf e_1+\mathbf e_2)=(\tfrac12,\tfrac12)$ denote the shift between the lattice $\Z^2$ and its dual $\Z^{2*}=\Z^2+\mathbf e^*$. 
Shift the southwest semi-infinite geodesics to the dual lattice by defining 
\[  \mathbf{b}^{*,\rho,\mathbf z}_{k} =  \mathbf{b}^{{\rm sw},\rho,\mathbf z+\mathbf e^*}_{k}-\mathbf e^* \qquad\text{  for $\mathbf z\in\Z^{2*}$ and $k\geq0$.} \]

The following disjointedness property for the primal and dual geodesics was first observed in \cite{dual}, and it played an essential role in the study of coalescence of geodesics {\cite{dual, coalnew, seppcoal}.
\begin{proposition}[{\cite[Theorem 5.1]{seppcoal}}]\label{pd_disjoint}
For any $\rho \in (0,1)$, the collections of paths 
 $ \{\bgeod{\,\rho}{z}\}_{z\in \Z^2}$ and  $ \{\mathbf{b}^{*,\rho,z}\}_{z\in \Z^{*2}}$ almost surely never cross each other, when the steps of these paths are seen as collections of unit edges.
\end{proposition}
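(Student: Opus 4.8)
The plan is to argue locally, edge-by-edge, using the defining recursions \eqref{busegeo} for the primal geodesics and \eqref{bg16} for the southwest geodesics, together with the elementary fact that at each vertex the two families make ``opposite'' choices governed by the same Busemann increments. Fix $\rho\in(0,1)$ and work on the full-probability event on which the Busemann process $\{B^\rho_{\mathbf x,\mathbf y}\}$ exists, all the geodesics $\bgeod{\,\rho}{\mathbf x}$ are the unique $\boldsymbol\xi[\rho]$-directed semi-infinite geodesics, and all ties $B^\rho_{\mathbf z-\mathbf e_1,\mathbf z}=B^\rho_{\mathbf z-\mathbf e_2,\mathbf z}$ fail (this has probability one since the relevant increments are independent continuous random variables off of any fixed down-right path, and a countable union of such null events is still null). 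Translating by $\mathbf e^*$, it suffices to show that for every $\mathbf z\in\Z^2$ the dual path $\mathbf{b}^{{\rm sw},\rho,\mathbf z}$, viewed as a set of unit edges on $\Z^2$, does not cross any primal path $\bgeod{\,\rho}{\mathbf x}$; ``crossing'' for an up-right path and a down-left path means sharing a unit edge with opposite traversal, or more precisely passing through the interior of a common unit square along complementary diagonals — I will reduce this to the statement that a primal edge and a dual edge never share a unit square in the crossing configuration.

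The key local computation is the following. Consider a unit square with southwest corner $\mathbf z$ and northeast corner $\mathbf z+\mathbf e_1+\mathbf e_2$. A primal geodesic passing through this square uses one of the two ``staircase'' edge pairs, and by \eqref{busegeo} its choice at $\mathbf z$ is: go to $\mathbf z+\mathbf e_1$ iff $B^\rho_{\mathbf z,\mathbf z+\mathbf e_1}\le B^\rho_{\mathbf z,\mathbf z+\mathbf e_2}$. A southwest geodesic passing through the same square makes, by \eqref{bg16} applied at the vertex $\mathbf z+\mathbf e_1+\mathbf e_2$, the choice: go to $(\mathbf z+\mathbf e_1+\mathbf e_2)-\mathbf e_1=\mathbf z+\mathbf e_2$ iff $B^\rho_{\mathbf z+\mathbf e_2,\mathbf z+\mathbf e_1+\mathbf e_2}\le B^\rho_{\mathbf z+\mathbf e_1,\mathbf z+\mathbf e_1+\mathbf e_2}$. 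The additivity of the Busemann process, $B^\rho_{\mathbf a,\mathbf c}=B^\rho_{\mathbf a,\mathbf b}+B^\rho_{\mathbf b,\mathbf c}$, applied around the square, shows that $B^\rho_{\mathbf z,\mathbf z+\mathbf e_1}\le B^\rho_{\mathbf z,\mathbf z+\mathbf e_2}$ holds if and only if $B^\rho_{\mathbf z+\mathbf e_1,\mathbf z+\mathbf e_1+\mathbf e_2}\le B^\rho_{\mathbf z+\mathbf e_2,\mathbf z+\mathbf e_1+\mathbf e_2}$ (both are equivalent to a single inequality between the two ``corner sums''). Consequently, in whichever way the primal family routes through the square, the dual family routes through it along the \emph{same} diagonal direction rather than the complementary one, so the two never cross inside that square. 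Since two up-right paths from the primal family can only meet, never cross, and likewise for the dual family, and since any hypothetical crossing of a primal path and a dual path would have to occur inside some unit square in the forbidden configuration, this local statement finishes the proof.

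To organize this cleanly I would: (1) reduce to the full-probability event and translate the dual paths back to $\Z^2$, so that everything is phrased in terms of $\bgeod{\,\rho}{\cdot}$ and $\mathbf{b}^{{\rm sw},\rho,\cdot}$; (2) state precisely what ``cross'' means for a monotone up-right path and a monotone down-left path on $\Z^2$ seen as edge sets, and observe that a crossing forces a common unit square traversed along complementary diagonals; (3) carry out the square-by-square parity computation above using additivity of $B^\rho$ and the tie-free event; (4) conclude. The main obstacle — really the only subtle point — is step (2): making the notion of ``crossing'' rigorous and checking that the only way a non-crossing pair of such paths can interact is by touching at a vertex or running along a shared edge in compatible directions, neither of which the proposition forbids. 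Once the combinatorial notion of crossing is pinned down, step (3) is a one-line consequence of additivity, and in fact this is exactly the mechanism identified in \cite{dual}; I expect the write-up to mirror \cite[proof of the disjointness statement]{seppcoal} closely.
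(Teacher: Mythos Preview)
The paper does not prove Proposition~\ref{pd_disjoint}; it quotes the result from \cite{seppcoal} (originally \cite{dual}) without argument. Your proposal is essentially the standard proof from those references: the cocycle identity $B^\rho_{\mathbf z,\mathbf z+\mathbf e_1}+B^\rho_{\mathbf z+\mathbf e_1,\mathbf z+\mathbf e_1+\mathbf e_2}=B^\rho_{\mathbf z,\mathbf z+\mathbf e_2}+B^\rho_{\mathbf z+\mathbf e_2,\mathbf z+\mathbf e_1+\mathbf e_2}$ forces the primal step out of $\mathbf z$ and the southwest step out of $\mathbf z+\mathbf e_1+\mathbf e_2$ to be coherent, and this is exactly what rules out a primal--dual edge crossing inside that face.

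One point of phrasing to tighten: after you translate the dual path back to $\Z^2$, the correct translation of ``a primal edge crosses a dual edge'' is \emph{not} that the primal and southwest paths share an edge or traverse complementary diagonals of a common square. Rather, the dual edge emanating (south or west) from the face with southwest corner $\mathbf z$ crosses precisely one of the two primal-lattice edges $[\mathbf z,\mathbf z+\mathbf e_1]$ or $[\mathbf z,\mathbf z+\mathbf e_2]$, and the claim is that the primal tree uses the \emph{other} one. Concretely: the primal out-edge from $\mathbf z$ is $\mathbf e_1$ iff $B^\rho_{\mathbf z,\mathbf z+\mathbf e_1}\le B^\rho_{\mathbf z,\mathbf z+\mathbf e_2}$, while the southwest out-step from $\mathbf z+\mathbf e_1+\mathbf e_2$ is $-\mathbf e_1$ iff $B^\rho_{\mathbf z+\mathbf e_2,\mathbf z+\mathbf e_1+\mathbf e_2}\le B^\rho_{\mathbf z+\mathbf e_1,\mathbf z+\mathbf e_1+\mathbf e_2}$; additivity makes these equivalent, and in that case the dual edge crosses the \emph{left} side $[\mathbf z,\mathbf z+\mathbf e_2]$, not the bottom side the primal uses. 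Since every primal-lattice edge is the out-edge from its southwest endpoint and every dual-lattice edge is the out-edge from the center of the adjacent face, this local check covers all possible crossings. You do not even need to exclude ties: the tie-breaking conventions in \eqref{busegeo} and \eqref{bg16} are chosen consistently so that the disjointness holds surely, not merely almost surely.
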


\subsection{Busemann functions in the inverse-gamma polymer}\label{Bus+Poly}

We start by defining the semi-infinite polymer measures in the inverse-gamma polymer, which is the model defined on $\mathbb{Z}^2$ without any boundary. For a direction $\boldsymbol\xi\in\R^2_{\ge0}\setminus\{(0,0)\}$, the semi-infinite up-right path $(\mathbf z_n)_{n=0}^\infty$ is $\boldsymbol\xi$-\textit{directed} if $\mathbf z_n/|\mathbf z_n|_1 \rightarrow \boldsymbol \xi/|\boldsymbol\xi|_1$ as $n\rightarrow \infty$. 
Fix $\mathbf{v} \in \mathbb{Z}^2$, the $\boldsymbol\xi$-directed semi-infinite polymer measure is obtained as the following weak limit 
\be\label{Q41} Q_{\mathbf{v}, \mathbf{z}_n} \rightharpoonup \Pi^{\boldsymbol\xi}_\mathbf{v} \qquad \text{ as }n \to \infty,\ee
and this weak limit exists $\mathbb{P}$-almost surely in the inverse-gamma polymer  \cite[Theorem 3.8]{Jan-Ras-20-aop}. The probability measure $\Pi^{\boldsymbol\xi}_\mathbf{v}$ is the quenched path measure of a random walk (taking $\mathbf{e}_1$- and $\mathbf{e}_2$-steps) started at $\mathbf{v}$. Furthermore, the transition probabilities are given by the Busemann functions in the model, which we will define below.

Following Theorem 4.1 from \cite{Geo-etal-15}, our next proposition defines and summarizes an important property of the Busemann function in the inverse-gamma polymer. 

\begin{proposition}[{\cite[Theorem 4.1]{Geo-etal-15}}]\label{poly_buse}   Fix $\rho\in(0,1)$.  There exists a process $\{B^\rho_{\mathbf x, \mathbf y}\}_{\mathbf x, \mathbf y\in\Z^2}$, called the Busemann process, which has the following properties. 
\begin{enumerate} [{\rm(i)}] 

\item  With probability one,  for each $\mathbf x, \mathbf y \in \Z^2$, 
$$B^\rho_{\mathbf x,\mathbf y} = \lim_{n\rightarrow \infty}  \bigl(\log Z_{\mathbf x, \mathbf u_n}-\log Z_{\mathbf y, \mathbf u_n}\bigr)
$$
for any sequence $\mathbf u_n$ such that $\abs{\mathbf u_n}_1\to\infty$ and  $\mathbf u_n/|\mathbf u_n|_1 \rightarrow \boldsymbol\xi[\rho]/|\boldsymbol \xi[\rho]|_1$ as $n\rightarrow \infty$.

\item  Define the  dual weights by 
\[\wc \w^\rho_\mathbf{z}=\frac1{e^{-B^\rho_{\mathbf z-\mathbf e_1,\mathbf z}}+e^{-B^\rho_{\mathbf z-\mathbf e_2,\mathbf z}}},\quad \mathbf z\in\mathbb Z^2\,,\] 
 Fix a  bi-infinite nearest-neighbor  down-right path ${\boldsymbol \pi} = (\boldsymbol \pi_i)_{i\in\Z}$ on $\Z^2$. 
Then the random variables 
 \begin{align*}  
  &\Big\{ B^\rho_{{\boldsymbol \pi}_{i}, {\boldsymbol \pi}_{i+1}}: i\in\Z \Big\}, \ 
  \Big\{\w_{\mathbf y}: 
\text{$\mathbf y\in\Z^2$ lies strictly to the left of and below $\boldsymbol \pi$} \Big\}, \\
&\qquad \text{and} \quad 
 \Big\{\widecheck \w^\rho_{\mathbf{z}}: 
\text{$\mathbf z\in\Z^2$ lies strictly to the right of and above $\boldsymbol \pi$} \Big\}
\end{align*} 
  are all mutually independent with  marginal distributions  
\be\label{buse781}  e^{B^\rho_{\mathbf x, \mathbf x+ \mathbf e_1}}\sim \textup{Ga}^{-1}(\rho),
\quad  
e^{B^\rho_{\mathbf x, \mathbf x+\mathbf e_2}}\sim \textup{Ga}^{-1}(1-\rho) \quad\text{and}\quad \w_{\mathbf y}, \;\widecheck{\w}^\rho_{\mathbf z}\sim \textup{Ga}^{-1}(1).
\ee
\end{enumerate} 
\end{proposition}

To simplify the notation, for any $\mathbf{z}\in\mathbb Z^2$, let us define 
$$I^{\rho}_{\mathbf z}=e^{B^\rho_{\mathbf z- \mathbf e_1, \mathbf z}} \qquad J^{\rho}_{\mathbf \mathbf z}=e^{B^\rho_{\mathbf z-\mathbf e_2,\mathbf z}}.$$

Again, using the independence from the theorem above, we can define two increment-stationary models using the same horizontal boundary along $y=0$ while the environments above and below the $\mathbf{e}_1$-axis are independent:
\begin{itemize}
\item The weights 
$\{\w_{\mathbf y}\}_{\mathbf{y} \in \mathbb{Z} \times \mathbb{Z}_{<0}}$ and $\{I^{{\rho}}_{k \mathbf e_1}\}_{k\in \mathbb{Z}}$ are mutually independent, and together they define a increment-stationary polymer with north boundary on the horizontal line $y = 0$.  The partition function and quenched polymer measure will be denoted by $Z^{\textup{N},{{\rho}}}_{\bbullet, (0,0)}, Q^{{ \textup{N},{\rho}}}_{\bbullet, (0,0)}$.

\item The weights 
$\{\wc \w_{\mathbf z}\}_{\mathbf{z} \in \mathbb{Z} \times \mathbb{Z}_{>0}}$ and $\{I^{{\rho}}_{k \mathbf e_1}\}_{k\in \mathbb{Z}}$ are mutually independent, and together they define a increment-stationary polymer with south boundary on the horizontal line $y = 0$.  The partition function and quenched polymer measure will be denoted by $Z{}^{{{\rho}}}_{(0,0), \bbullet}, Q{}^{{{\rho}}}_{(0,0), \bbullet}$. 
\end{itemize}
We note that the origin $(0,0)$ can be translated to any other vertex $\mathbf v\in \mathbb{Z}^2$ in the definitions above. 

Next, we connect the Busemann functions with semi-infinite polymer measures. Recall the $\boldsymbol\xi[\rho]$-directed semi-infinite polymer measure $\Pi^{{{\rho}}}_{\mathbf v}$ defined in \eqref{Q41}, it turns out that its  (random) transition probabilities are given by  
\begin{align}
\begin{split}
\pi^{{\rho}}(\mathbf x, \mathbf x+\mathbf e_1) 
= \frac{J^{{\rho}}_{\mathbf x+\mathbf e_2}}{I^{{\rho}}_{\mathbf x+\mathbf e_1}+ J^{{\rho}}_{\mathbf x+\mathbf e_2}} \quad \text{ and } \quad 
\pi^{{\rho}}(\mathbf x, \mathbf x+\mathbf e_2) 
= \frac{I^{{\rho}}_{\mathbf x+\mathbf e_1}}{I^{{\rho}}_{\mathbf x+\mathbf e_1}+ J^{{\rho}}_{\mathbf x+\mathbf e_2}}. 
\end{split}
\label{polymerRWRE}
\end{align}
In addition to this, we could also define a $(-\boldsymbol\xi[\rho])$-directed southwest semi-infinite polymer measure starting at $\mathbf{v}$. Denoted it by $\Pi{}^{{\textup{sw}, {\rho}}}_{\mathbf v}$, and it is defined to have the following (random) transition probabilities 
\beq \pi{}^{\textup{sw}, {\rho}}(\mathbf x, \mathbf x-\mathbf e_1) = \frac{J^{{\rho}}_{\mathbf x}}{I^{{\rho}}_{\mathbf x}+ J^{{\rho}}_{\mathbf x}}\quad\text{ and }\quad \pi^{\textup{sw}, {\rho}}(\mathbf x, \mathbf x-\mathbf e_2) = \frac{I^{{\rho}}_{\mathbf x}}{I^{{\rho}}_{\mathbf x}+ J^{{\rho}}_{\mathbf x}}. \label{backpolymerRWRE}\eeq

Our next proposition relates the semi-infinite polymers to the polymer measures in the increment-stationary polymer.  
For $\mathbf u$ and $\mathbf v $ in $\mathbb Z^2$ with $\mathbf u\cdot \mathbf e_2 < \mathbf v\cdot \mathbf e_2 $. Let $\Pi^\rho_{\mathbf  u,\mathbf  v}$ be the distribution of the Markov chain that starts at $\mathbf u$,  has transition probabilities $\pi^\rho(\mathbf x,\mathbf  x+\mathbf  e_i)$ for $i\in\{1,2\}$ if $\mathbf x\cdot \mathbf e_2 < \mathbf v\cdot \mathbf e_2 $, and ends when it gets to the horizontal line $ y= \mathbf v\cdot \mathbf e_2$. Note the sampled path from $\Pi^\rho_{\mathbf u,\mathbf v}$ is just the sampled path from the semi-infinite polymer measure $\Pi^\rho_{\mathbf u}$ up until it reaches the line $ y= \mathbf v\cdot \mathbf e_2$.
Similarly, let $\wc\Pi{}^{\rho}_{\mathbf v, \mathbf u}$ be the distribution of the Markov chain that starts at $\mathbf v$,  has transition probabilities $\pi^{\textup{sw}, \rho}(\mathbf x,\mathbf x-\mathbf e_i)$ for $i\in\{1,2\}$, $\mathbf x\cdot \mathbf e_2 > \mathbf u\cdot \mathbf e_2 $, and ends when it gets to the horizontal line $ y= \mathbf u\cdot \mathbf e_2$. Again, the sampled path from $\wc\Pi{}^\rho_{\mathbf  v,\mathbf u}$ is just the sampled path from the backward semi-infinite polymer measure $\Pi{}^{\textup{sw}, \rho}_{\mathbf  v}$ up until it reaches the line $ y= \mathbf u\cdot \mathbf e_2$.

\begin{proposition}[{\cite[Proposition 5.1]{ras-sep-she-}}] \label{stat_iid}
Fix $\rho \in (0,1)$, we have $\mathbb P$-almost surely, for any $\mathbf u$ and $\mathbf v$ in $\mathbb Z^2$ with $\mathbf u\cdot \mathbf e_2 < 0 < \mathbf v\cdot \mathbf e_2 $, for any $\boldsymbol\gamma\in\mathbb{X}_{\mathbf u,\mathbf v}$ (viewed as either up-right or down-left path), 
$$\Pi^{{\rho}}_{\mathbf u, (0,0)}(\boldsymbol\gamma) =  Q^{\textup{N},{{\rho}}, }_{\mathbf u, (0,0)}(\boldsymbol\gamma)
\quad\text{and}\quad\wc\Pi{}^{{\rho}}_{\mathbf v, (0,0)}(\boldsymbol\gamma) =  Q{}^{{{\rho}}}_{(0,0), \mathbf v}(\boldsymbol\gamma).$$
\end{proposition}

In the final part, we record the disjointness property between the sampled paths of semi-infinite polymer measures \(\{\Pi^{\rho}_{\mathbf{z}} : \mathbf{z} \in \mathbb{Z}^2\}\) and southwest semi-infinite polymer measures \(\{{\Pi}{}^{\textup{sw}, \rho}_{\mathbf{z}} : \mathbf{z} \in \mathbb{Z}^2\}\), as demonstrated in Section 5.2 of \cite{ras-sep-she-}.

\begin{proposition}[{\cite[Section 5.2]{ras-sep-she-}}]\label{duality}
There exists a coupling for \(\{\Pi^{\rho}_{\mathbf{z}} : \mathbf{z} \in \mathbb{Z}^2\} \cup \{{\Pi}{}^{\textup{sw}, \rho}_{\mathbf{z}} : \mathbf{z} \in \mathbb{Z}^2\}\) on \(\mathbb{Z}^2\) such that, with \(\mathbb{P}\)-almost sure probability, the edges along any sampled northeast polymer path are disjoint from the edges of any sampled southwest polymer path when the latter is shifted by \((\mathbf{e}_1 + \mathbf{e}_2) / 2\).
\end{proposition}

\section{Proof of Theorem \ref{main1}}\label{pf_main}


\begin{figure}[t]
\begin{center}

\tikzset{every picture/.style={line width=0.75pt}} 

\begin{tikzpicture}[x=0.75pt,y=0.75pt,yscale=-1,xscale=1]

\draw  [dash pattern={on 0.84pt off 2.51pt}]  (126.89,180.01) -- (247.93,180.06) ;
\draw  [fill={rgb, 255:red, 0; green, 0; blue, 0 }  ,fill opacity=1 ] (158.58,179.91) .. controls (158.58,179.01) and (159.31,178.28) .. (160.21,178.28) .. controls (161.1,178.28) and (161.83,179.01) .. (161.83,179.91) .. controls (161.83,180.8) and (161.1,181.53) .. (160.21,181.53) .. controls (159.31,181.53) and (158.58,180.8) .. (158.58,179.91) -- cycle ;
\draw [color={rgb, 255:red, 155; green, 155; blue, 155 }  ,draw opacity=0.66 ][line width=3]    (101.15,240.45) .. controls (254.24,219.6) and (180.75,150.05) .. (220.75,120.05) ;
\draw    (101.15,240.45) .. controls (240.04,217) and (172.27,135.53) .. (212.26,92.58) ;
\draw [shift={(214.16,90.64)}, rotate = 136.06] [fill={rgb, 255:red, 0; green, 0; blue, 0 }  ][line width=0.08]  [draw opacity=0] (8.93,-4.29) -- (0,0) -- (8.93,4.29) -- cycle    ;
\draw  [fill={rgb, 255:red, 0; green, 0; blue, 0 }  ,fill opacity=1 ] (219.12,120.05) .. controls (219.12,119.15) and (219.85,118.42) .. (220.75,118.42) .. controls (221.65,118.42) and (222.37,119.15) .. (222.37,120.05) .. controls (222.37,120.95) and (221.65,121.67) .. (220.75,121.67) .. controls (219.85,121.67) and (219.12,120.95) .. (219.12,120.05) -- cycle ;
\draw  [fill={rgb, 255:red, 0; green, 0; blue, 0 }  ,fill opacity=1 ] (99.53,240.45) .. controls (99.53,239.55) and (100.25,238.82) .. (101.15,238.82) .. controls (102.05,238.82) and (102.78,239.55) .. (102.78,240.45) .. controls (102.78,241.35) and (102.05,242.07) .. (101.15,242.07) .. controls (100.25,242.07) and (99.53,241.35) .. (99.53,240.45) -- cycle ;
\draw  [fill={rgb, 255:red, 0; green, 0; blue, 0 }  ,fill opacity=1 ] (197.13,120.45) .. controls (197.13,119.55) and (197.85,118.82) .. (198.75,118.82) .. controls (199.65,118.82) and (200.38,119.55) .. (200.38,120.45) .. controls (200.38,121.35) and (199.65,122.07) .. (198.75,122.07) .. controls (197.85,122.07) and (197.13,121.35) .. (197.13,120.45) -- cycle ;
\draw  [fill={rgb, 255:red, 0; green, 0; blue, 0 }  ,fill opacity=1 ] (190.87,180.08) .. controls (190.87,179.18) and (191.6,178.45) .. (192.49,178.45) .. controls (193.39,178.45) and (194.12,179.18) .. (194.12,180.08) .. controls (194.12,180.98) and (193.39,181.7) .. (192.49,181.7) .. controls (191.6,181.7) and (190.87,180.98) .. (190.87,180.08) -- cycle ;

\draw (81.33,246.93) node [anchor=north west][inner sep=0.75pt]    {$(0,0)$};
\draw (226,111.13) node [anchor=north west][inner sep=0.75pt]    {$(n,n)$};
\draw (172.2,165.88) node [anchor=north west][inner sep=0.75pt]    {$\mathbf{w}_r$};
\draw (134.4,182.53) node [anchor=north west][inner sep=0.75pt]    {$(r,r)$};
\draw (175,110.28) node [anchor=north west][inner sep=0.75pt]    {$\mathbf{w}_{n}$};

\end{tikzpicture}
\captionsetup{width=.8\linewidth}
\caption{On the event $A_{r, n}$, the geodesic between $(0,0)$ and $(n, n)$, shown in gray, must stay to the right of the semi-infinite geodesic shown in black. By path monotonicity, the geodesic will have a large transversal fluctuation at the horizontal line $y= r$. }\label{fig1}

\end{center}
\end{figure}
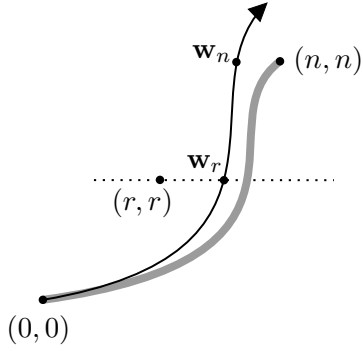
Let $\mathbf{v}_r^{\textup{min}}(G_{0,n})$ denote the lattice point with the minimum $\ell^1$-norm among the intersection points of the geodesic of $G_{(0,0),(n,n)}$ and the horizontal line $y = r$. We note that it suffices to prove the lower bound 
$$\mathbb{P}(\mathbf{v}^{\textup{min}}_r(G_{0,n}) \cdot \mathbf{e}_1-r\geq {atr^{2/3}}) \geq e^{-Ct^3}$$
for some fixed positive constant $a$, as we can then let $t'= at$ and obtain the statement in Theorem \ref{main1}. This will be done by proving a similar version of the lower bound but for the semi-infinite geodesic.

Let us look at the semi-infinite geodesic in the diagonal direction starting at ${(0,0)}$, $\mathbf{b}^{1/2, {(0,0)}} =\{\mathbf{b}^{1/2, {(0,0)}}_i\}_{i=0}^\infty$. For each integer $k$, let $\mathbf{w}_k$ denote the first point of the semi-infinite geodesic that enters the horizontal line $y=k$. For some absolute constant $a$ which we will fix later, and for any $n$ and $0\leq r\leq n/2$, define the event
$$A_{r,n} = \Big\{\mathbf{w}_r\cdot \mathbf{e}_1 -r \geq {atr^{2/3}}\Big\} \cap \Big\{\mathbf{w}_{n}\cdot \mathbf{e}_1 - n \leq -1 \Big\}.$$

As illustrated in Figure \ref{fig1}, by path monotonicity and the uniqueness of geodesics, on the event $A_{r,n}$, the finite geodesic between ${(0,0)}$ and $(n, n)$ must stay to the right of the semi-infinite geodesic $\mathbf{b}^{1/2,{(0,0)}}$. Otherwise, it must cross $\mathbf{b}^{1/2, {(0,0)}}$ before reaching $(n, n)$, and this is impossible due to the uniqueness of geodesics. 
Therefore, Proposition \ref{main_prop} below will directly imply Theorem \ref{main1}.
\begin{proposition}\label{main_prop}
there exist positive constants $C, n_0, r_0, c_0$ such that for each $n\geq n_0, r_0\leq r \leq n/2$ and  $1 \leq t \leq c_0 r^{1/3}$, 
$$\mathbb{P}(A_{r, n}) \geq e^{-Ct^3}.$$
\end{proposition}

\begin{figure}[t]
\begin{center}
\tikzset{every picture/.style={line width=0.75pt}} 

\begin{tikzpicture}[x=0.75pt,y=0.75pt,yscale=-1,xscale=1]

\draw  [dash pattern={on 0.84pt off 2.51pt}]  (126.89,180.01) -- (247.93,180.06) ;
\draw  [fill={rgb, 255:red, 0; green, 0; blue, 0 }  ,fill opacity=1 ] (158.58,179.91) .. controls (158.58,179.01) and (159.31,178.28) .. (160.21,178.28) .. controls (161.1,178.28) and (161.83,179.01) .. (161.83,179.91) .. controls (161.83,180.8) and (161.1,181.53) .. (160.21,181.53) .. controls (159.31,181.53) and (158.58,180.8) .. (158.58,179.91) -- cycle ;
\draw    (101.15,240.45) .. controls (240.04,217) and (172.27,135.53) .. (212.26,92.58) ;
\draw [shift={(214.16,90.64)}, rotate = 136.06] [fill={rgb, 255:red, 0; green, 0; blue, 0 }  ][line width=0.08]  [draw opacity=0] (8.93,-4.29) -- (0,0) -- (8.93,4.29) -- cycle    ;
\draw  [fill={rgb, 255:red, 0; green, 0; blue, 0 }  ,fill opacity=1 ] (220.13,105.22) .. controls (220.13,104.32) and (220.85,103.59) .. (221.75,103.59) .. controls (222.65,103.59) and (223.38,104.32) .. (223.38,105.22) .. controls (223.38,106.11) and (222.65,106.84) .. (221.75,106.84) .. controls (220.85,106.84) and (220.13,106.11) .. (220.13,105.22) -- cycle ;
\draw  [fill={rgb, 255:red, 0; green, 0; blue, 0 }  ,fill opacity=1 ] (99.53,240.45) .. controls (99.53,239.55) and (100.25,238.82) .. (101.15,238.82) .. controls (102.05,238.82) and (102.78,239.55) .. (102.78,240.45) .. controls (102.78,241.35) and (102.05,242.07) .. (101.15,242.07) .. controls (100.25,242.07) and (99.53,241.35) .. (99.53,240.45) -- cycle ;
\draw  [fill={rgb, 255:red, 0; green, 0; blue, 0 }  ,fill opacity=1 ] (190.87,180.08) .. controls (190.87,179.18) and (191.6,178.45) .. (192.49,178.45) .. controls (193.39,178.45) and (194.12,179.18) .. (194.12,180.08) .. controls (194.12,180.98) and (193.39,181.7) .. (192.49,181.7) .. controls (191.6,181.7) and (190.87,180.98) .. (190.87,180.08) -- cycle ;
\draw  [dash pattern={on 4.5pt off 4.5pt}]  (221.75,106.84) .. controls (199.05,177.27) and (214.71,211.03) .. (180.78,234.01) ;
\draw [shift={(179.2,235.05)}, rotate = 327.43] [color={rgb, 255:red, 0; green, 0; blue, 0 }  ][line width=0.75]    (10.93,-3.29) .. controls (6.95,-1.4) and (3.31,-0.3) .. (0,0) .. controls (3.31,0.3) and (6.95,1.4) .. (10.93,3.29)   ;
\draw  [fill={rgb, 255:red, 0; green, 0; blue, 0 }  ,fill opacity=1 ] (205,180.15) .. controls (205,179.25) and (205.73,178.52) .. (206.63,178.52) .. controls (207.52,178.52) and (208.25,179.25) .. (208.25,180.15) .. controls (208.25,181.04) and (207.52,181.77) .. (206.63,181.77) .. controls (205.73,181.77) and (205,181.04) .. (205,180.15) -- cycle ;
\draw  [dash pattern={on 0.84pt off 2.51pt}]  (401.02,179.15) -- (522.06,179.19) ;
\draw  [fill={rgb, 255:red, 0; green, 0; blue, 0 }  ,fill opacity=1 ] (432.72,179.04) .. controls (432.72,178.14) and (433.44,177.42) .. (434.34,177.42) .. controls (435.24,177.42) and (435.97,178.14) .. (435.97,179.04) .. controls (435.97,179.94) and (435.24,180.67) .. (434.34,180.67) .. controls (433.44,180.67) and (432.72,179.94) .. (432.72,179.04) -- cycle ;
\draw [line width=0.75]    (375.28,239.58) .. controls (456.13,235.47) and (454.8,209.47) .. (466.63,180.84) ;
\draw  [fill={rgb, 255:red, 0; green, 0; blue, 0 }  ,fill opacity=1 ] (494.26,104.35) .. controls (494.26,103.45) and (494.99,102.73) .. (495.88,102.73) .. controls (496.78,102.73) and (497.51,103.45) .. (497.51,104.35) .. controls (497.51,105.25) and (496.78,105.98) .. (495.88,105.98) .. controls (494.99,105.98) and (494.26,105.25) .. (494.26,104.35) -- cycle ;
\draw  [fill={rgb, 255:red, 0; green, 0; blue, 0 }  ,fill opacity=1 ] (373.66,239.58) .. controls (373.66,238.69) and (374.39,237.96) .. (375.28,237.96) .. controls (376.18,237.96) and (376.91,238.69) .. (376.91,239.58) .. controls (376.91,240.48) and (376.18,241.21) .. (375.28,241.21) .. controls (374.39,241.21) and (373.66,240.48) .. (373.66,239.58) -- cycle ;
\draw  [fill={rgb, 255:red, 0; green, 0; blue, 0 }  ,fill opacity=1 ] (465,179.21) .. controls (465,178.31) and (465.73,177.59) .. (466.63,177.59) .. controls (467.52,177.59) and (468.25,178.31) .. (468.25,179.21) .. controls (468.25,180.11) and (467.52,180.84) .. (466.63,180.84) .. controls (465.73,180.84) and (465,180.11) .. (465,179.21) -- cycle ;
\draw [color={rgb, 255:red, 155; green, 155; blue, 155 }  ,draw opacity=1 ][line width=2.25]  [dash pattern={on 6.75pt off 4.5pt}]  (495.88,104.35) .. controls (488.8,132.8) and (486.13,148.13) .. (480.76,177.65) ;
\draw  [fill={rgb, 255:red, 0; green, 0; blue, 0 }  ,fill opacity=1 ] (479.13,179.28) .. controls (479.13,178.38) and (479.86,177.65) .. (480.76,177.65) .. controls (481.66,177.65) and (482.38,178.38) .. (482.38,179.28) .. controls (482.38,180.18) and (481.66,180.9) .. (480.76,180.9) .. controls (479.86,180.9) and (479.13,180.18) .. (479.13,179.28) -- cycle ;
\draw    (434.34,179.04) -- (466.63,179.21) ;
\draw [color={rgb, 255:red, 155; green, 155; blue, 155 }  ,draw opacity=0.55 ][line width=3]  [dash pattern={on 7.88pt off 4.5pt}]  (434.34,179.04) -- (479.13,179.28) ;

\draw (81.33,246.93) node [anchor=north west][inner sep=0.75pt]    {$(0,0)$};
\draw (228.42,86.62) node [anchor=north west][inner sep=0.75pt]    {$( n,n+1)$};
\draw (173.2,163.88) node [anchor=north west][inner sep=0.75pt]    {$\mathbf{w}_r$};
\draw (134.4,181.53) node [anchor=north west][inner sep=0.75pt]    {$( r,r)$};
\draw (209.33,179.9991) node [anchor=north west][inner sep=0.75pt]    {$\mathbf{w}^{\textup{sw}}_r$};
\draw (355.47,246.07) node [anchor=north west][inner sep=0.75pt]    {$( 0,0)$};
\draw (501.55,85.75) node [anchor=north west][inner sep=0.75pt]    {$( n,n+1)$};
\draw (403.02,182.55) node [anchor=north west][inner sep=0.75pt]    {$( r,r)$};

\draw (453.02,211.55) node [anchor=north west][inner sep=0.75pt]    {$G^{\textup{N}, 1/2}_{0,r}$};

\draw (456.02,116.55) node [anchor=north west][inner sep=0.75pt]    {$G^{1/2}_{r,n}$};

\end{tikzpicture}
\captionsetup{width=.8\linewidth}
\caption{\textit{Left:} Using the disjointedness property described in Proposition \ref{pd_disjoint}, the forward geodesic must be on the left of the southwest geodesic (shown in the dotted curve), thus it must stay to the left of $(n,n)$.  \textit{Right:} An illustration of applying Proposition \ref{inf_stat} to the two semi-infinite geodesics on the right. We would then obtain two geodesics from two increment-stationary LPPs with their boundary along $y=r$.} \label{fig2}

\end{center}
\end{figure}
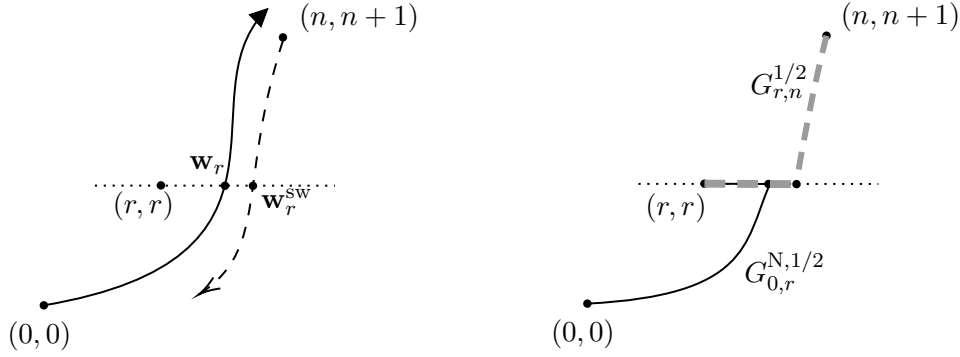

For the remainder of the section, we prove Proposition \ref{main_prop}.
To do this we will utilize the southwest (or dual) semi-infinite geodesic, which is introduced in Section \ref{semiinf}. Let $\mathbf{b}^{\textup{sw}, 1/2, (n, n+1)}$ be the southwest semi-infinite geodesic starting at $(n, n+1)$, and let $\mathbf{w}^{\textup{sw}}_r$ denote the first point where the semi-infinite geodesic $\mathbf{b}^{\textup{sw}, 1/2, (n, n+1)}$  hit the line $y=r$. Define the event 
$$B_{r,n} = \Big\{\mathbf{w}_r\cdot \mathbf{e}_1 -r \geq {atr^{2/3}}\Big\} \cap \Big\{\mathbf{w}^{\textup{sw}}_{r}\cdot \mathbf{e}_1  \geq \mathbf{w}_r\cdot \mathbf{e}_1\Big\}.$$
By Proposition \ref{pd_disjoint}, the primal geodesic $\mathbf{b}^{1/2, {(0,0)}}$ and the dual geodesic $\mathbf{b}^{\textup{sw}, 1/2, (n, n+1)} - (\tfrac{1}{2},\tfrac{1}{2})$ are (edge) disjoint. Together with path monotonicity, illustrated on the left of Figure \ref{fig2}, this implies that  
$$B_{r,n} \subset A_{r, n}.$$
Thus, it suffices to prove the lower bound $\mathbb{P}(B_{r,n}) \geq e^{-Ct^3}$.

Now, utilizing Proposition \ref{inf_stat}, we can reduce this to an estimate of two exit times for two increment-stationary LPP using the same Busemann boundary on the horizontal line $y = r$. They are denoted as $G^{\textup{N}, 1/2}_{(0,0), (r,r)} = G^{\textup{N}, 1/2}_{0,r}$ and $G^{1/2}_{(r,r), (n, n+1)} = G^{1/2}_{r, n}$ and illustrated on the right of Figure \ref{fig2}. Let us use $\tau_{0,r}$ and $\tau_{r,n}$ to denote the $\mathbf{e}_1$-coordinates of the points where the two geodesics of $ G^{\textup{N}, 1/2}_{0,r}$ and $G^{1/2}_{r, n}$ enter into the boundary on the line $y=r$. 
Fix positive constants $a_1 < a_2 < b_1$, and let us define
\begin{equation}\label{define_D}
D = D_1 \cap D_2 = \Big\{\tau_{0,r} - r \in \big[\floor{a_1 tr^{2/3}}, \floor{a_2 tr^{2/3}}\big] \Big\} \cap \Big\{\tau_{r,n} - r \in \big[\floor{b_1 t(n-r)^{2/3}}, \infty\big)\Big\}.
\end{equation}
By Proposition \ref{inf_stat}, we have $D \subset B_{r,n}$. Hence for the remainder of the section, we will prove the following proposition. 
\begin{proposition}\label{proveD}
there exist positive constants $C, n_0, r_0, c_0$ such that for each $n\geq n_0, r_0\leq r \leq n/2$ and  $1 \leq t \leq c_0 r^{1/3}$, 
$$\mathbb{P}(D) \geq e^{-Ct^3}.$$
\end{proposition}
The proof of this proposition utilizes a refined version of the Radon-Nikodym derivative trick which originally appeared in \cite{Bal-Sep-10}. This argument since then has been further developed in \cite{opt_exit, seppcoal} for the exponential LPP and \cite{ras-sep-she-} for the inverse-gamma polymer for obtaining various probability lower bounds, such as coalescence, transversal fluctuation of semi-infinite geodesics and large exit times in the increment-stationary models. 
\begin{proof}
To start, let us fix a small positive constant $\delta_0$, and its value may be lowered in the proof. We will also define 
$$a_1 = \delta_0,\quad a_2 = 100\delta_0, \quad b_1 = \sqrt{\delta_0}, \quad b_2 = 100\sqrt{\delta_0}, \quad q_1 = \delta_0, \quad q_2 = \sqrt{\delta_0},$$
these parameters will acquire their meaning in Section \ref{fix_para}. 

Let us index the weights on the boundary $y=r$ by $I_j = B^{1/2}_{(r+ j-1, r), (r+j, r)}$ for $j \in \mathbb{Z}$. Next, we will add a new set of weights obtained by lowering and raising the weights $I_j$ in two disjoint regions of the line $y=r$. Let 
\begin{equation}\label{def_para}
\lambda = \tfrac{1}{2}+q_1 tr^{-1/3} \quad  and \quad \eta = \tfrac{1}{2}-q_2 t(n-r)^{-1/3},
\end{equation}
and we may always assume $\lambda, \eta \in [\tfrac{1}{3}, \tfrac{2}{3}]$ by lowering the constant $c_0$ in Proposition \ref{proveD} according to our choices for $q_1$ and $q_2$. Define a new set of weights
\begin{alignat}{2}
\wt I_j & = \tfrac{1/2}{\lambda}I_j \sim \text{Exp}(\lambda)\qquad &&\text{ for } j \in \big[r+ \floor{a_1 tr^{2/3}}+1, r+ \floor{a_2 tr^{2/3}}\big]\nonumber\\
\wt I_j & = \tfrac{1/2}{\eta} I_j  \sim \text{Exp}(\eta)&&\text{ for } j \in   \big[r+ \floor{b_1 t(n-r)^{2/3}}+1, r+ \floor{b_2 t(n-r)^{2/3}}\big] \label{newI}\\
\wt I_j  & = I_j   & &\text{ for all other $j$}.\nonumber
\end{alignat}
We will use the notation $\wt{\mathbb{P}}^{\lambda, \eta}$ to denote the marginal distribution of the new environment with $\wt I_j$ together with the primal and dual weights below and above the line $y=r$ respectively. Let $\mathbb{P}^{1/2}$ be the marginal distribution of the original weights with $\Exp(1/2)$ on the boundary, and let $f =  d\wt{\mathbb{P}}^{\lambda, \eta}/d\mathbb{P}^{1/2}$ be the Radon-Nikodym derivative. Lemma \ref{radnik} records that 
$$\mathbb{E}^{1/2}[f^2] \leq e^{Ct^3}.$$
Now, to finish the proof, it suffices for us to show that 
\begin{equation}\label{D_est}
\wt{\mathbb{P}}^{\lambda, \eta}(D) \geq \tfrac{1}{2},
\end{equation}
because once we have this, by Cauchy-Schwarz inequality, 
$$\tfrac{1}{2} \leq \wt{\mathbb{P}}^{\lambda, \eta}(D) = \mathbb{E}^{1/2}[\mathbbm{1}_D f] \leq \sqrt{\mathbb{P}^{1/2}(D)} \sqrt{\mathbb{E}^{1/2}[f^2]}  \leq \sqrt{\mathbb{P}^{1/2}(D)} e^{Ct^3},$$
and this would finish the proof of the proposition. 

Recall the definition of $D_1$ and $D_2$ in \eqref{define_D}. Now to show \eqref{D_est}, we will show separately that $\wt{\mathbb{P}}^{\lambda, \eta}(D_1) \geq 1- Ct^{-3}$ and $\wt{\mathbb{P}}^{\lambda, \eta}(D_2) \geq 1- Ct^{-3}$. We will record them as the following two lemmas which we will prove in Section \ref{sec_lem}.
\begin{lemma}\label{lemD_1}
There exists a constant $C$ such that  $\wt{\mathbb{P}}^{\lambda, \eta}(D_1) \geq 1- Ct^{-3}$.
\end{lemma}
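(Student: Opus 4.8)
\textbf{Plan of proof for Lemma \ref{lemD_1}.}

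The event $D_1$ asks that the exit point $\tau_{0,r}$ of the geodesic of $G^{\lambda,\eta,\textup{N}}_{0,r}$ from its boundary on $y=r$ land in the window $[r+\floor{a_1 tr^{2/3}}, r+\floor{a_2 tr^{2/3}})$. Under $\wt{\mathbb{P}}^{\lambda,\eta}$ the boundary weights on this window are i.i.d.\ $\Exp(\lambda)$ with $\lambda = \tfrac12 + q_1 t r^{-1/3}$, so the relevant stationary LPP is, after ignoring the perturbed $\Exp(\eta)$ weights far to the right (which do not affect the geodesic to $(0,0)$, since that geodesic enters the boundary well to the left of the $\eta$-block once we arrange $a_2 < b_1$), essentially a stationary model with parameter $\lambda$. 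The plan is to split the event into ``not too small'' and ``not too large'' and control each by comparing the exit time to its characteristic location. First I would invoke Proposition \ref{exit2} (in its north-boundary, rotated form): for the stationary LPP with parameter $\lambda$ run to the point $(0,0)$ from depth $r$ below, the exit time concentrates around the $\mathbf{e}_1$-coordinate $u^\lambda$ of the intersection of the $(-\boldsymbol\xi[\lambda])$-ray from the bulk endpoint with the boundary line $y=r$. A direct computation with $\boldsymbol\xi[\lambda]=(\lambda^2,(1-\lambda)^2)$ gives that, measured from $(r,r)$, this characteristic displacement is $r\big(\tfrac{\lambda^2}{(1-\lambda)^2}-1\big) = r\cdot\tfrac{2\lambda-1}{(1-\lambda)^2}$, which for $\lambda=\tfrac12+q_1tr^{-1/3}$ equals $(8q_1+o(1))\,tr^{2/3}$. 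Hence choosing $a_1, a_2$ to straddle $8q_1$ with a fixed multiplicative gap (e.g.\ $a_1 = 4q_1$, $a_2 = 12q_1$, adjusting the $o(1)$ by the lower bound $r\ge r_0$), Proposition \ref{exit2} with fluctuation scale $\epsilon_0 t r^{2/3}$ gives that $\tau_{0,r}-r$ lies in $[\floor{a_1tr^{2/3}},\floor{a_2tr^{2/3}})$ with probability at least $1-e^{-Ct^3} \ge 1 - Ct^{-3}$.

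There are two technical points to address. First, Proposition \ref{exit2} as stated is for a stationary LPP with a \emph{uniform} $\Exp(\rho)$ boundary, whereas here only the finite block $[r+\floor{a_1tr^{2/3}}, r+\floor{a_2tr^{2/3}})$ carries $\Exp(\lambda)$ weights, the block further right carries $\Exp(\eta)$ weights, and everything else carries the original $\Exp(1/2)$ Busemann weights $I_j$. The fix is a monotonicity/coupling argument: since the geodesic to $(0,0)$ exits at a location to the left of the $\eta$-block with overwhelming probability (again by an exit-time estimate, using $a_2 < b_1$ and $t \le c_0 r^{1/3}$ so that the windows are separated on scale $\asymp tr^{2/3}$ versus the geodesic's transversal scale), we may replace the environment to the right of the $\lambda$-block by anything we like without changing $\tau_{0,r}$ on that high-probability event; and to the left of the $\lambda$-block, lowering the boundary weights from $\Exp(1/2)$ to a larger rate only pushes the exit time further right, while raising it pushes it left, so $\tau_{0,r}$ is sandwiched between the exit times of two genuinely uniform stationary models (parameters close to $\tfrac12$ and to $\lambda$), to both of which Proposition \ref{exit2} applies, pinning the window. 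Second, one must check that the hypotheses of Proposition \ref{exit2} are met with $r$ (not $n$) as the size parameter and that $\lambda$ stays in a fixed neighborhood of $\tfrac12$: this holds because $|\lambda - \tfrac12| = q_1 tr^{-1/3} \le q_1 c_0$, which we make as small as needed by shrinking $c_0$, and $t \le c_0 r^{1/3}$ keeps the fluctuation scale $tr^{2/3} \le c_0 r$ comfortably below $r$.

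The main obstacle is the bookkeeping in the preceding sandwiching step: one has to argue carefully that the perturbations in the far-right $\eta$-block are invisible to the geodesic terminating at $(0,0)$, and that the mismatch between the true boundary (Busemann weights $I_j \sim \Exp(1/2)$ outside the window, $\wt I_j\sim\Exp(\lambda)$ inside) and a genuinely homogeneous $\Exp(\lambda)$ (or $\Exp(1/2)$) boundary can be absorbed by monotone coupling without moving the exit time out of $[\floor{a_1tr^{2/3}},\floor{a_2tr^{2/3}})$. Concretely I expect to use: (i) FKG/monotonicity of the exit time in the boundary weights; (ii) the fact that for a homogeneous $\Exp(1/2)$ boundary the exit time to $(0,0)$ from depth $r$ is $O_{\mathbb P}(r^{2/3})$ (Proposition \ref{exit2} with $\lambda=1/2$, $u^{1/2}=r$), so it is $\le \tfrac12 a_1 tr^{2/3}$ with probability $\ge 1-Ct^{-3}$ once $t\ge 1$ — wait, this needs $a_1 t r^{2/3}\gg r^{2/3}$, i.e.\ we need the characteristic shift $\asymp q_1 tr^{2/3}$ to dominate the fluctuations, which is exactly why the lower window edge $a_1$ is taken proportional to $q_1$ and why the result requires $t$ bounded below by a constant; and (iii) a symmetric argument for the upper edge $a_2$. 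Once the sandwich is in place the conclusion $\wt{\mathbb P}^{\lambda,\eta}(D_1)\ge 1-Ct^{-3}$ is immediate from the $e^{-Ct^3} \le Ct^{-3}$ bound applied to the two flanking homogeneous models.
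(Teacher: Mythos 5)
The approach you sketch is genuinely different from the paper's, but it has a real gap in the lower-bound half of $D_1$. Your plan is to apply Proposition~\ref{exit2} to the modified boundary via a monotone sandwich between two genuinely uniform models. The coupling direction is indeed as you say: lowering the boundary weights (in the north-boundary picture) pushes the exit point to the right. Comparing the modified boundary $\wt I$ to the uniform $\Exp(\lambda)$ boundary (where only the $\lambda$-window is unchanged and everything else has been further lowered to $\Exp(\lambda)$) gives the one-sided bound $\tau_{0,r}\le \tau^\lambda_{0,r}\approx r + 8q_1 t r^{2/3}$, and this is roughly how the paper handles the right edge as well, via the event $U_1^c$. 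But for the lower bound $\tau_{0,r}-r\ge\floor{a_1 t r^{2/3}}$, your only available flanking homogeneous model is the one with parameter close to $\tfrac12$, whose characteristic displacement is $0$, not inside the target window. The sandwich therefore yields only $\tau_{0,r}\in[r+O(\sqrt{t}\,r^{2/3}),\,r+8q_1 t r^{2/3}]$, which is fully consistent with the exit landing at displacement $o(a_1 t r^{2/3})$ and hence outside $D_1$. Put differently: lowering only the weights in the window creates a competition between the exit staying near the $\Exp(1/2)$-characteristic (displacement $\approx 0$) and jumping into the window (displacement $\approx 8q_1 t r^{2/3}$), and the sandwich argument cannot decide which candidate wins.

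The paper resolves exactly this competition, and by a mechanism your plan never reaches. It writes the passage value of the modified boundary restricted to exit in the window as $G^{\lambda,\textup{N}}_{0,r}$ minus an explicit boundary correction (on the high-probability event $U_1$ that the homogeneous $\Exp(\lambda)$ geodesic already exits in the window), bounds the passage value for exits to the left of the window by $G^{1/2,\textup{N}}_{0,r}$, computes the expectation gap $\mathbb{E}[G^{\lambda,\textup{N}}_{0,r}]-\mathbb{E}[G^{1/2,\textup{N}}_{0,r}]\gtrsim t^2 r^{1/3}$ (choosing $\delta_0$ small so the boundary correction term cannot swamp it), and then applies Chebyshev with the variance bound of Proposition~\ref{stat_var} (variance $\lesssim tr^{2/3}$) to conclude the wrong side loses with probability $\le Ct^{-3}$. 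That Chebyshev step is the actual origin of the polynomial order in the lemma statement; in your sketch you attribute the $Ct^{-3}$ to weakening the $e^{-Ct^3}$ of Proposition~\ref{exit2}, which is a symptom that the central competition-of-maxima estimate is missing from the argument.
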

\begin{lemma}\label{lemD_2}
There exists a constant $C$ such that  $\wt{\mathbb{P}}^{\lambda, \eta}(D_2) \geq 1- Ct^{-3}$.
\end{lemma}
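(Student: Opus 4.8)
The plan is to control the exit time $\tau_{r,n}$ of the geodesic of $G^{1/2,\textup{N}}_{?}$ — more precisely the stationary LPP $G^{1/2}_{r,n}$ from the boundary $y=r$ up to $(n,n+1)$ — under the tilted measure $\wt{\mathbb{P}}^{\lambda,\eta}$. Under $\wt{\mathbb{P}}^{\lambda,\eta}$ the boundary weights $\wt I_j$ on the segment $j\in[r+\fl{b_1 t(n-r)^{2/3}},\, r+\fl{b_2 t(n-r)^{2/3}}-1]$ are $\Exp(\eta)$ with $\eta=\tfrac12-q_2 t(n-r)^{-1/3}$, a density slightly below $1/2$. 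Raising boundary weights to the right of the origin makes the geodesic \emph{want} to travel further along the boundary, i.e.\ it pushes the exit time $\tau_{r,n}$ to the right. So the heuristic is: with $\Exp(\eta)$ weights on that segment the characteristic direction tilts so that the natural exit point sits at $\mathbf e_1$-coordinate roughly $r + c\, q_2 t(n-r)^{2/3}$ for an appropriate $c$, and choosing $b_1$ below $c q_2$ and $b_2$ above it makes the exit land in $[\fl{b_1 t(n-r)^{2/3}},\infty)$ with high probability. The key inputs are the exit-time concentration Proposition \ref{exit2} and the disjointness/monotonicity structure already used.

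The steps, in order. First, I would set up the comparison: on the tilted environment, consider the auxiliary stationary LPP whose boundary is the \emph{entire} line $y=r$ carrying i.i.d.\ $\Exp(\eta)$ weights (i.e.\ pretend all of $\wt I_j$ were $\Exp(\eta)$, not just the finite segment). Call its exit time $\tau^\eta_{r,n}$. Proposition \ref{exit2}, applied with density $\eta$, scale $N:=n-r$, and the appropriate $t$, gives that $\tau^\eta_{r,n} - r$ concentrates within $\epsilon_0 t N^{2/3}$ of $u^\eta$, where $u^\eta$ is the $\mathbf e_1$-coordinate where the $-\boldsymbol\xi[\eta]$-ray from $(n,n+1)$ meets $y=r$; a direct computation with \eqref{char_dir} gives $u^\eta - r \asymp q_2 t N^{2/3}$ up to an explicit constant, with an error term coming from $\eta$ being only $O(t N^{-1/3})$-close to $1/2$. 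So for $b_1$ chosen small enough (relative to $q_2$ and the constant) and after absorbing the $\epsilon_0$-fluctuation, $\wt{\mathbb{P}}\big(\tau^\eta_{r,n} - r \geq \fl{b_1 t N^{2/3}}\big) \geq 1 - Ct^{-3}$. Second, I would argue that on the high-probability event that $\tau^\eta_{r,n}-r$ lands in $[\fl{b_1 t N^{2/3}}, \fl{b_2 t N^{2/3}})$ — which is exactly the segment where $\wt I_j$ and the all-$\Exp(\eta)$ boundary agree — the actual exit time $\tau_{r,n}$ equals $\tau^\eta_{r,n}$, because the geodesic only sees boundary weights on the interval between $r$ and its exit point, and on that interval the two boundary configurations coincide (the only discrepancy is that $\wt I_j$ reverts to $\Exp(1/2)$ for $j$ outside $[r+\fl{b_1 tN^{2/3}}, r+\fl{b_2 tN^{2/3}}-1]$, and those indices are not visited). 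This coupling step needs a short monotonicity/locality argument: lowering the boundary weights to the right of $\tau^\eta_{r,n}$ can only move the exit time leftward or keep it fixed, and the weights strictly between $r$ and the exit are untouched, so in fact $\tau_{r,n}=\tau^\eta_{r,n}$ on that event — or at worst $\tau_{r,n}\ge \fl{b_1 tN^{2/3}}+r$ still holds. Third, combine: $\wt{\mathbb{P}}^{\lambda,\eta}(D_2) = \wt{\mathbb{P}}^{\lambda,\eta}(\tau_{r,n}-r \geq \fl{b_1 t N^{2/3}}) \geq 1 - Ct^{-3}$. One also needs to note $\eta\in[\tfrac13,\tfrac23]$ and $t\le c_0 r^{1/3}\le c_0 N^{1/3}$ so Proposition \ref{exit2} applies with its hypotheses ($N\ge n_0$ for $n_0$ large, which holds since $r\ge r_0$ and $n-r\ge n/2\ge r_0$), and the raising of weights to the \emph{right} — not the left — of the characteristic exit is what makes the bound one-sided, matching the one-sided event $D_2$.

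The main obstacle I anticipate is the locality/coupling argument in the second step: Proposition \ref{exit2} is stated for the \emph{uniform} $\Exp(\rho)$ boundary, whereas $\wt{\mathbb{P}}^{\lambda,\eta}$ has a patched boundary (both the lowered $\Exp(\lambda)$ patch near $[a_1,a_2]$ and the raised $\Exp(\eta)$ patch near $[b_1,b_2]$, $\Exp(1/2)$ elsewhere). I need to show the exit time of the geodesic into the line $y=r$ from above is unaffected by the $\Exp(\lambda)$ patch (which is to the left of $b_1$, hence between $r$ and a potential exit — this could be an issue and must be handled: but the exit \emph{into} $y=r$ from $(n,n+1)$ looks at boundary weights only between $r$ and $\tau_{r,n}$; if $\tau_{r,n}$ lands past $a_2$, the $\Exp(\lambda)$ patch \emph{is} seen, so I actually need $b_1 > a_2$, which is guaranteed by the ordering $a_1<a_2<b_1<b_2$ imposed in \eqref{newI}, and then I compare to the environment with $\Exp(1/2)$ on $[a_1,a_2]$ and $\Exp(\eta)$ on $[b_1,b_2]$; FKG/monotonicity in the boundary weights plus the fact that the $\Exp(\lambda)$-to-$\Exp(1/2)$ change only makes weights smaller on $[a_1,a_2]$ — which cannot increase the last-passage value through that region — lets me push $\tau_{r,n}$ in the favorable direction). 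Making this monotonicity rigorous at the level of the \emph{exit location} (rather than the last-passage value) is the delicate point, but it is exactly the type of argument used in \cite{seppcoal, opt_exit}, so I would follow those references. The constant-chasing ($q_2$ versus $b_1, b_2$, and the $\epsilon_0$ in Proposition \ref{exit2}) is routine and deferred to Section \ref{fix_para}.
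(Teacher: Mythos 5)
Your overall strategy is different from the paper's, and as written it contains a gap at exactly the step you flagged as ``the delicate point.'' The paper does not compare exit times via a monotone coupling; instead it compares \emph{passage values}. Specifically, it shows (a) $\wt G^{\lambda,\eta}_{r,n}(-\infty, r+\fl{b_1 t(n-r)^{2/3}}-1] \leq G^{1/2}_{r,n}$, because $\wt I_j \leq I_j$ for all $j < r+\fl{b_1 t(n-r)^{2/3}}$; (b) on the high-probability event $V_1$ that the all-$\Exp(\eta)$ geodesic exits inside $[b_1, b_2]$, $\wt G^{\lambda,\eta}_{r,n}[b_1, b_2]$ equals $G^\eta_{r,n}$ minus an explicit boundary correction of order $\delta_0^{\text{power}} t^2 (n-r)^{1/3}$; and (c) the mean gap $\mathbb{E}[G^\eta_{r,n}] - \mathbb{E}[G^{1/2}_{r,n}] \gtrsim t^2(n-r)^{1/3}$ dominates the correction by choice of $\delta_0$, after which Chebyshev concentration closes the argument. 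This is a quantitative competition, not a coupling.

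The genuine problem with your proposal is the sign of the monotonicity claim. Going from the all-$\Exp(\eta)$ boundary to the patched boundary $\wt I$, the weights on $j \in [0, b_1-1]$ all \emph{decrease} (from $\Exp(\eta)$ with $\eta<1/2$, mean $>2$, to $\Exp(1/2)$ or $\Exp(\lambda)$ with $\lambda>1/2$, means $\leq 2$). In the exit-time functional $\max_i (h_i + G_{\text{bulk}})$ this uniformly lowers $h_i$ by a fixed amount for every $i \geq b_1$ while lowering $h_i$ by strictly less for $i<b_1$ and by nothing for $i\leq 0$; that pushes the maximizer \emph{leftward}, i.e.\ the $\Exp(\lambda)$ patch and the $\Exp(1/2)$ segments to its left and right work \emph{against} the conclusion $\tau_{r,n} - r \geq \fl{b_1 t(n-r)^{2/3}}$. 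Your claim that this ``cannot increase the last-passage value through that region --- lets me push $\tau_{r,n}$ in the favorable direction'' is backwards: it is precisely a leftward push, and there is no monotone coupling giving $\tau_{r,n}\geq\tau^\eta_{r,n}$ or even $\tau_{r,n}\geq b_1$ pathwise. You also assert in passing that on the event $\{\tau^\eta\in[b_1,b_2)\}$ ``the two boundary configurations coincide on the interval between $r$ and the exit point,'' which is false: they differ on all of $[0,b_1-1]$. To salvage your argument you would have to \emph{quantify} the leftward loss (the accumulated decrement $\sum_{j<b_1}(\,\cdot\,)$, which is $O(\delta_0^{\text{power}} t^2(n-r)^{1/3})$) and show it is dominated by the rightward gain from the $\Exp(\eta)$ patch ($\gtrsim t^2(n-r)^{1/3}$ at the level of passage values), with $\delta_0$ small --- but that computation is exactly the paper's passage-value argument and is not implied by FKG or exit-time monotonicity alone.
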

Given these two lemmas (their proofs will appear in Section \ref{sec_lem}), \eqref{D_est} holds and we have finished the proof of this proposition.
\end{proof}

\section{Proof of Theorem 1.2}\label{pf_main2}
In the positive-temperature setting, the path monotonicity used in the CGM is no longer directly available. To obtain the path monotonicity of the sampled polymer paths, we rely on a special coupling, utilizing a certain monotonicity of the quenched polymer measures.

We will start by transferring the desired lower-bound 
$$\mathbb{P}\Big(Q_{(0,0), (n,n)}\big\{ {\boldsymbol\gamma} \in \mathbb{X}_{(0,0), (n,n)} \;: \: \mathbf{e}_1 \cdot \mathbf{v}^{\textup{min}}_r ({\boldsymbol\gamma}) - r \geq atr^{2/3} \big\} \geq 1-{e^{-C_1 t^2 n^{1/3}}}\Big) \geq e^{-C_2t^3},$$
into a statement about the increment-stationary polymer model with a horizontal boundary on the line $y = n+1$. 
Let $Z^{\textup{N}, 1/2}_{(0,0), (n, n+1)}$ and $Q^{\textup{N}, 1/2}_{(0,0), (n, n+1)}$ denote the partition function and the quenched polymer measure for an increment-stationary polymer model with i.i.d.\ $\textup{Ga}^{-1}({1/2})$ 
weighted boundary on the north side along $y= n+1$. In addition, let \( Z^{\textup{N,half}}_{(0,0), (n, n+1)} \) and \( Q^{\textup{N,half}}_{(0,0), (n, n+1)} \) denote a similar model, but with all the boundary weights set to zero to the right of the point \((n, n+1)\). Recall, we may view the boundary weights as being attached to the unit edges. Finally, we note that the original bulk polymer model between $(0,0)$ and $(n,n)$ can be viewed as a polymer model with the north boundary between $(0,0)$ and $(n, n+1)$, but setting all the boundary weights to zero (note $h_0$ is still equal to $1$ by definition). We will denote these as $ Z^{\textup{N,bulk}}_{(0,0), (n, n+1)} = Z_{(0,0), (n,n)}$ and $ Q^{\textup{N,bulk}}_{(0,0), (n, n+1)} =  Q_{(0,0), (n,n)}$.

Let $\mathfrak{A}$ be the collection of paths between $(0,0)$ and the line $y=n+1$ such that these paths cross $y = r$ through or to the right of $(r+atr^{2/3}, r)$ and hit the line $y=  n+1$  at or to the left of $(n, n+1)$. In the next part, we will show the following inequality 
\begin{equation}\label{meq}
Q_{(0,0), (n,n)}\big\{ {\boldsymbol\gamma} \in \mathbb{X}_{(0,0), (n,n)} \;: \: \mathbf{e}_1 \cdot \mathbf{v}^{\textup{min}}_r ({\boldsymbol\gamma}) - r \geq atr^{2/3} \big\}  \geq Q^{\textup{N}, 1/2}_{(0,0), (n, n+1)}(\mathfrak{A}),
\end{equation}
note once we have this, to finish the proof, it remains to show the following proposition. 
\begin{proposition}\label{to_bdry}
There exist positive constants $C_1, C_2, n_0, r_0, c_0$ such that for each $n\geq n_0, r_0\leq r \leq n/2$ and  $1 \leq t \leq c_0 r^{1/3}$, 
$$
\mathbb{P}\Big(Q^{\textup{N}, 1/2}_{(0,0), (n, n+1)}(\mathfrak{A}) \geq 1-{e^{-C_1 t^2 
n^{1/3}}}\Big) \geq e^{-C_2t^3}.
$$
\end{proposition}

Now, coming back to showing \eqref{meq}, we first note that by definition 
\begin{equation}\label{meq_1}
Q_{(0,0), (n,n)}\big\{ {\boldsymbol\gamma} \in \mathbb{X}_{(0,0), (n,n)} \;: \: \mathbf{e}_1 \cdot \mathbf{v}^{\textup{min}}_r ({\boldsymbol\gamma}) - r \geq atr^{2/3} \big\} = Q^{\textup{N,bulk}}_{(0,0), (n, n+1)}(\mathfrak{A}).
\end{equation}
We also have 
\begin{equation}\label{meq_2}
Q^{\textup{N,half}}_{(0,0), (n, n+1)}(\mathfrak{A}) \geq Q^{\textup{N}, 1/2}_{(0,0), (n, n+1)}(\mathfrak{A})
\end{equation}
because these two have the same numerator, but 
$Z^{\textup{N,half}}_{(0,0), (n, n+1)} \leq Z^{\textup{N}, 1/2}_{(0,0), (n, n+1)}$ as the later has more paths with positive partition function values.  

In view of \eqref{meq_1} and \eqref{meq_2}, to establish \eqref{meq}, it remains to show that 
\begin{equation}\label{meq_3}
Q^{\textup{N,bulk}}_{(0,0), (n, n+1)}(\mathfrak{A}) \geq Q^{\textup{N,half}}_{(0,0), (n, n+1)}(\mathfrak{A}).
\end{equation}
To do this, we establish a coupling between \(Q^{\textup{N,bulk}}_{(0,0), (n, n+1)}\) and \(Q^{\textup{N,half}}_{(0,0), (n, n+1)}\). First, we recognize that both models can actually be viewed as directed polymer models defined within the rectangle \([\![(0,0), (n, n+1)]\!]\), and this is because all their boundary weights to the right of \((n, n+1)\) are zero. 
In addition, the only difference between their environments is the boundary weights along the strip \([\![(0,n+1), (n, n+1)]\!]\): in the bulk model, all weights are zero, while in the half model, the weights are i.i.d.\ \(\textup{Ga}^{-1}(1/2)\). Using Lemma A.1 of \cite{Bus-Sep-22}, for each $\mathbf z \in [\![(0,0), (n, n+1)]\!]$ the following inequalities hold, provided all the partition functions are non-zero (which assumes there exist directed paths between the starting and end points):
\begin{equation}\label{poly_m}
\frac{Z^{\textup{N,half}}_{\mathbf{z}, (n, n+1)}}{Z^{\textup{N,half}}_{\mathbf{z}+\mathbf e_1, (n, n+1)}} \geq \frac{Z^{\textup{N,bulk}}_{\mathbf{z}, (n, n+1)}}{Z^{\textup{N,bulk}}_{\mathbf{z}+\mathbf e_1, (n, n+1)}} \qquad \text{ and }\qquad \frac{Z^{\textup{N,half}}_{\mathbf{z}, (n, n+1)}}{Z^{\textup{N,half}}_{\mathbf{z}+\mathbf e_2, (n, n+1)}} \leq \frac{Z^{\textup{N,bulk}}_{\mathbf{z}, (n, n+1)}}{Z^{\textup{N,bulk}}_{\mathbf{z}+\mathbf e_2, (n, n+1)}}.
\end{equation}
These inequalities compare the ratios of partition functions in the half and bulk models when moving from \(\mathbf{z}\) to \(\mathbf{z} + \mathbf{e}_1\) and from \(\mathbf{z}\) to \(\mathbf{z} + \mathbf{e}_2\). Similar to the semi-infinite polymers, these ratios form the transition probabilities when the sampled polymer paths are viewed as random walks between $(0,0)$ and $(n, n+1)$ in random environments (see Section A.1 in the arXiv version of \cite{Jan-Ras-20-aop}).

The inequality appearing in \eqref{poly_m} implies that the sampled path in the half model is more likely to take a \(\mathbf{e}_2\)-step and less likely to take a \(\mathbf{e}_1\)-step, compared to the bulk model. Because of this, there is a coupling of \(Q^{\textup{N,bulk}}_{(0,0), (n, n+1)}\) and \(Q^{\textup{N,half}}_{(0,0), (n, n+1)}\) such that the sampled path in the half model will always be to the left of the bulk model. Now, under this coupling, because of the path monotonicity, 
$$\Big(Q^{\textup{N,half}}_{(0,0), (n, n+1)}, Q^{\textup{N,bulk}}_{(0,0), (n, n+1)}\Big)(\mathfrak A\times \mathfrak A^c) = 0.$$
With this, we obtain the desired result for \eqref{meq_3} as follows
\begin{align*}
Q^{\textup{N,half}}_{(0,0), (n, n+1)}(\mathfrak A)
& = \Big(Q^{\textup{N,half}}_{(0,0), (n, n+1)}, Q^{\textup{N,bulk}}_{(0,0), (n, n+1)}\Big)(\mathfrak A\times \mathfrak A) \geq  Q^{\textup{N,bulk}}_{(0,0), (n, n+1)}(\mathfrak A).
\end{align*}
To summarize the arguments above, we have reduced the proof of Theorem \ref{main2} to showing Proposition \ref{to_bdry}. 

To show Proposition \ref{to_bdry}, we will need to consider southwest/dual polymers. By using the coupling from Proposition \ref{duality} between primal and dual semi-infinite polymer measures and their connection to increment-stationary polymers described in Proposition \ref{stat_iid}, we can couple the following three polymer measures:
\[ \Big(Q^{\textup{N},1/2}_{(0,0), (n,n+1)}, Q^{\textup{N},1/2}_{(0,0), (r,r)},  Q{}^{1/2}_{(r,r), (n,n+1)}\Big). \]
Here, \(Q^{\textup{N},1/2}_{(0,0), (n,n+1)}\) and \(Q{}^{\textup{N},1/2}_{(0,0), (r,r)}\) correspond to the forward semi-infinite polymer starting at \((0,0)\), while \(Q{}^{1/2}_{(r,r), (n,n+1)}\) corresponds to the southwest polymer starting at \((n, n+1)\). 

For $a_1 < a_2 < b_1$, 
let us define the following collections of paths
\begin{align*}
&\mathfrak{D}_1 = \text{ paths from $(0,0)$ to $y=r$ and first hit $[\![(r + \floor{a_1tr^{2/3}}, r), (r + \floor{a_2tr^{2/3}}, r)]\!]$ }\\
&\mathfrak{D}_2 = \text{ paths from $(n,n+1)$ and first hit $y=r$ at or to the right of $(r+\floor{ b_1t(n-r)^{2/3}},r)$.}
\end{align*}
Because of the disjointness property of the sampled paths from Proposition \ref{duality}, by path monotonicity (see the left of Figure \ref{fig2} for an illustration) $$\Big(Q^{\textup{N},1/2}_{(0,0), (n,n+1)}, Q^{\textup{N},1/2}_{(0,0), (r,r)},  Q^{1/2}_{(r,r), (n,n)}\Big)(\mathfrak{A}^c \times \mathfrak{D}_1 \times \mathfrak{D}_2) = 0.$$
Then, it holds that 
\begin{align*}  Q^{\textup{N}, 1/2}_{(0,0), (n, n+1)}(\mathfrak{A})
& \geq \Big(Q^{\textup{N},1/2}_{(0,0), (n,n+1)}, Q^{\textup{N},1/2}_{(0,0), (r,r)},  Q{}^{1/2}_{(r,r), (n,n+1)}\Big)(\mathfrak{A} \times \mathfrak{D_1} \times \mathfrak{D_2})\\
& = \Big(Q^{\textup{N},1/2}_{(0,0), (r,r)},  Q{}^{1/2}_{(r,r), (n,n+1)}\Big)(\mathfrak{D_1} \times \mathfrak{D_2})
\end{align*}
To get Proposition \ref{to_bdry}, it suffices for us to show the following proposition, which corresponds to Proposition \ref{proveD} from the CGM.
\begin{proposition}
there exist positive constants $C_1, C_2, n_0, r_0, c_0$ such that for each $n\geq n_0, r_0\leq r \leq n/2$ and  $1 \leq t \leq c_0 r^{1/3}$, 
$$\mathbb{P}\Big(\Big\{Q^{\textup{N},1/2}_{(0,0), (r,r)} (\mathfrak{D}_1) \geq 1-{e^{-C_1 t^2 
n^{1/3}}}\Big\}\bigcap \Big\{Q{}^{1/2}_{(r,r), (n,n+1)}(\mathfrak{D}_2)\geq 1-{e^{-C_1 t^2 
n^{1/3}}}\Big\} \Big) \geq e^{-C_2t^3}.$$
\end{proposition}
Just like Proposition \ref{proveD}, the proof of this proposition utilizes a Radon-Nikodym derivative calculation. A simpler version of this calculation has appeared in \cite{ras-sep-she-}.   
\begin{proof}
Let us fix $\delta_0 >0$, and recall
$$a_1 = \delta_0,\quad a_2 = 100\delta_0, \quad b_1 = \sqrt{\delta_0}, \quad b_2 = 100\sqrt{\delta_0}, \quad q_1 = \delta_0, \quad q_2 = \sqrt{\delta_0}.$$ 
To simplify the notation, let us denote the weights on the boundary $y=r$ as $I_j = e^{B^{1/2}_{(r+ j-1, r), (r+j, r)}}$ for $j \in \mathbb{Z}$. We add a new set of weights that is independent of the original environment. Recall the two parameters $\lambda$ and $\eta$ defined in \eqref{def_para}, and define a new set of weights
\begin{alignat}{2}
\wt I_j^\lambda  & \sim \text{Ga}^{-1}(\lambda)\qquad &&\text{ for } j \in \big[r+ \floor{a_1 tr^{2/3}}+1, r+ \floor{a_2 tr^{2/3}}\big]\nonumber\\
\wt I_j^\eta & \sim \text{Ga}^{-1}(\eta)&&\text{ for } j \in   \big[r+ \floor{b_1 t(n-r)^{2/3}}+1, r+ \floor{b_2 t(n-r)^{2/3}}\big] \label{newI2}\\
\wt I_j  & = I_j   & &\text{ for all other $j$}.\nonumber
\end{alignat}
Again, let $\mathbb{P}^{1/2}$ and $\wt{\mathbb{P}}^{\lambda, \eta}$ to denote the marginal distributions of the original and new environments, and let $f =  d\wt{\mathbb{P}}^{\lambda, \eta}/d\mathbb{P}^{1/2}$. Lemma \ref{radnik2} records that 
$\mathbb{E}^{1/2}[f^2] \leq e^{Ct^3}.$

Let $\wt Q{}^{\textup{N},\lambda, \eta}$ and $\wt Q{}^{\lambda, \eta}$ denote the quenched polymer measures defined in the environment $\wt{\mathbb{P}}^{\lambda, \eta}$.
As we have shown below \eqref{D_est}, utilizing a Cauchy-Schwarz inequality,  to complete the proof it suffices for us to show that 
$$
{\mathbb{P}}\Big(\Big\{\wt Q{}^{\textup{N}, \lambda, \eta}_{(0,0), (r,r)} (\mathfrak{D}_1) \geq 1-{e^{-C_1 t^2 
n^{1/3}}}\Big\}\bigcap \Big\{\wt Q^{\lambda, \eta}_{(r,r), (n,n+1)}(\mathfrak{D}_2)\geq 1-{e^{-C_1 t^2 
n^{1/3}}}\Big\} \Big)\geq \tfrac{1}{2}.
$$
To do this, we will show Lemma \ref{lemD_12} and Lemma \ref{lemD_22} below. 

\begin{lemma}\label{lemD_12}
There exist constants $C_1, C_2$ such that  $${\mathbb{P}}\Big(\wt Q{}^{\textup{N}, \lambda, \eta}_{(0,0), (r,r)} (\mathfrak{D}_1) \geq 1-{e^{-C_1 t^2 
n^{1/3}}}\Big)\geq 1-C_2t^{-3}.$$
\end{lemma}
\begin{lemma}\label{lemD_22}
There exist constants $C_1, C_2$ such that  $${\mathbb{P}}\Big(\wt Q^{\lambda, \eta}_{(r,r), (n,n+1)}(\mathfrak{D}_2)\geq 1-{e^{-C_1 t^2 
n^{1/3}}} \Big)\geq 1-C_2t^{-3}.$$
\end{lemma}
Once we prove these in the next section, this will conclude the proof of this proposition.
\end{proof}

\section{Proofs of the lemmas}\label{sec_lem}

\subsection{The constants $a_1, a_2, b_1, b_2, q_1, q_2$} \label{fix_para}

\begin{figure}[t]
\begin{center}

\tikzset{every picture/.style={line width=0.75pt}} 

\begin{tikzpicture}[x=0.75pt,y=0.75pt,yscale=-0.9,xscale=0.9]

\draw    (60.32,180.2) -- (306.4,179.88) ;
\draw  [fill={rgb, 255:red, 0; green, 0; blue, 0 }  ,fill opacity=1 ] (98.53,250.54) .. controls (98.53,249.71) and (99.2,249.04) .. (100.03,249.04) .. controls (100.86,249.04) and (101.53,249.71) .. (101.53,250.54) .. controls (101.53,251.37) and (100.86,252.04) .. (100.03,252.04) .. controls (99.2,252.04) and (98.53,251.37) .. (98.53,250.54) -- cycle ;
\draw  [fill={rgb, 255:red, 0; green, 0; blue, 0 }  ,fill opacity=1 ] (287.87,60.21) .. controls (287.87,59.38) and (288.54,58.71) .. (289.37,58.71) .. controls (290.2,58.71) and (290.87,59.38) .. (290.87,60.21) .. controls (290.87,61.04) and (290.2,61.71) .. (289.37,61.71) .. controls (288.54,61.71) and (287.87,61.04) .. (287.87,60.21) -- cycle ;
\draw  [fill={rgb, 255:red, 0; green, 0; blue, 0 }  ,fill opacity=1 ] (168.53,180.17) .. controls (168.53,179.34) and (169.2,178.67) .. (170.03,178.67) .. controls (170.86,178.67) and (171.53,179.34) .. (171.53,180.17) .. controls (171.53,181) and (170.86,181.67) .. (170.03,181.67) .. controls (169.2,181.67) and (168.53,181) .. (168.53,180.17) -- cycle ;
\draw [line width=3.75]    (189.6,179.88) -- (207.6,179.88) ;
\draw [line width=3.75]    (224.4,179.88) -- (258,179.88) ;
\draw [color={rgb, 255:red, 155; green, 155; blue, 155 }  ,draw opacity=0.77 ]   (100.03,250.54) -- (218.38,165.45) ;
\draw [shift={(220,164.28)}, rotate = 144.28] [color={rgb, 255:red, 155; green, 155; blue, 155 }  ,draw opacity=0.77 ][line width=0.75]    (10.93,-3.29) .. controls (6.95,-1.4) and (3.31,-0.3) .. (0,0) .. controls (3.31,0.3) and (6.95,1.4) .. (10.93,3.29)   ;
\draw [color={rgb, 255:red, 155; green, 155; blue, 155 }  ,draw opacity=0.77 ]   (289.37,60.21) -- (232.34,202.42) ;
\draw [shift={(231.6,204.28)}, rotate = 291.85] [color={rgb, 255:red, 155; green, 155; blue, 155 }  ,draw opacity=0.77 ][line width=0.75]    (10.93,-3.29) .. controls (6.95,-1.4) and (3.31,-0.3) .. (0,0) .. controls (3.31,0.3) and (6.95,1.4) .. (10.93,3.29)   ;

\draw (103.53,253.94) node [anchor=north west][inner sep=0.75pt]    {$( 0,0)$};
\draw (129.53,159.67) node [anchor=north west][inner sep=0.75pt]    {$( r,r)$};
\draw (245.53,40) node [anchor=north west][inner sep=0.75pt]    {$(n,n+1)$};
\draw (167.62,203.61) node [anchor=north west][inner sep=0.75pt]    {$\boldsymbol{\xi}[\lambda]$-directed};
\draw (270.42,120.41) node [anchor=north west][inner sep=0.75pt]    {$-\boldsymbol{\xi}[\eta]$-directed};

\end{tikzpicture}

\captionsetup{width=.8\linewidth}
\caption{An illustration for choosing the parameters described in Section \ref{fix_para}. The two thick segments represent the intervals $\big[r+\floor{a_1 tr^{2/3}}, r+\floor{a_2tr^{2/3}}\big]$ and $\big[r+\floor{b_1t(n-r)^{2/3}},r+ \floor{b_2t(n-r)^{2/3}}\big]$. }\label{fig3}
\end{center}
\end{figure}

Recall $q_1$ and $q_2$ appeared in the definition of the boundary weight parameters $\lambda$ and $\eta$, and $a_1, a_2, b_1, b_2$ are used to define the new environments.  We will now elaborate on these constants.

For both the CGM and the inverse-gamma polymer, the idea is the same, as illustrated in Figure \ref{fig3}. Essentially, we want to select parameters such that when the \(\boldsymbol{\xi}[\lambda]\)-directed ray from \((0,0)\) and the \((- \boldsymbol{\xi}[\eta])\)-directed ray from \((n, n+1)\) cross the horizontal line \(y=r\), their \(\mathbf{e}_1\)-coordinates are within the intervals \([r +  \lfloor (a_1+\epsilon) t r^{2/3} \rfloor, r + \lfloor (a_2-\epsilon) t r^{2/3} \rfloor]\) and \([r + \lfloor  (b_1+ \epsilon) t (n-r)^{2/3} \rfloor, r + \lfloor  (b_2-\epsilon) t (n-r)^{2/3} \rfloor]\), respectively, for some \(\epsilon > 0\).

Let us fix $\delta_0 >0$, and define 
$$a_1 = \delta_0,\quad a_2 = 100\delta_0, \quad b_1 = \sqrt{\delta_0}, \quad b_2 = 100\sqrt{\delta_0}, \quad q_1 = \delta_0, \quad q_2 = \sqrt{\delta_0}.$$ 

In the CGM, looking at the $\boldsymbol{\xi}[\lambda]$-directed ray from ${(0,0)}$, ignoring the floor function, the $\mathbf{e}_1$-coordinate of its intersection point with $y=r$ is given by 
$$ r \cdot \Big(\frac{\frac{1}{2} + q_1tr^{-1/3}}{\frac{1}{2} - q_1tr^{-1/3}}\Big)^2 = r \cdot \Big(1 + \frac{2q_1tr^{-1/3}}{\frac{1}{2} - q_1tr^{-1/3}}\Big)^2.$$
Recall we let $q_1 = \delta_0$, then the above expression is bounded between 
$$[r + 2\delta_0 tr^{2/3}, r+ 99 \delta_0 tr^{2/3}]$$ when $\delta_0$ is fixed sufficiently small. A similar calculation also holds for the \((- \boldsymbol{\xi}[\eta])\)-directed ray, as well as for the inverse-gamma polymer, for which we omit the details.

Finally, we note that in our proofs of the lemmas, we may lower the values of $\delta_0$. By our definitions, the situation depicted in Figure \ref{fig3} would still hold.

\subsection{Proof of Lemma \ref{lemD_1} and Lemma \ref{lemD_2}}\label{pf_lem}
Before the proofs, we will introduce some notation. 
Recall the two increment-stationary LPP with boundary $y=r$ are denoted by $G^{\textup{N}, \rho}_{0,r}$ and $G^{\rho}_{r, n}$. For $a<b$ and $c<d$, let $G^{\rho}_{0, r}[a, b]$ and $G^{\textup{N}, \rho}_{r,n}[c, d]$ denote the last-passage values when considering the collection of paths whose starting points' \(\mathbf{e}_1\)-coordinates are contained within the intervals \([a,b]\) and \([c,d]\) along the boundary.

Next, instead of looking at different marginal distributions, we will use a generic probability measure $\mathbb{P}$ and use different notation to record different last-passage values obtained from different weights. For example, 
$G^{\textup{N}, 1/2}_{0,r}$, $G^{\textup{N},\lambda}_{0,r}$ and $G^{ \textup{N}, \eta}_{0,r}$ denote the last-passage time for the boundary weights $I_i \sim \Exp(\tfrac{1}{2}), \frac{1/2}{\lambda} I_i \sim \Exp(\lambda)$ and $\frac{1/2}{\eta}I_i \sim\Exp(\eta)$, respectively. Let $\wt G^{ \textup{N}, \lambda, \eta}_{0,r}$ to denote the last-passage value from the boundary $\wt I$ defined in \eqref{newI}. The notation for the last-passage value from $(r,r)$ to $(n,n+1)$ will follow the same superscript convention, except the superscript``${\textup{N}}$'' will be omitted.

\begin{proof}[Proof of Lemma \ref{lemD_1}]
Recall the definition of $\tau_{0,r}$ above \eqref{define_D}. To guarantee $\tau_{0,r}$ for the modified environment is inside $\big[r+\floor{a_1tr^{2/3}}, r+ \floor{a_2tr^{2/3}}\big]$ with high probability,  it suffices for us to show the following two inequalities 
\begin{align}
&\mathbb{P} \Big(\wt G^{ \textup{N}, \lambda, \eta}_{0,r}(-\infty, r+\floor{a_1tr^{2/3}}-1] < \wt G^{ \textup{N}, \lambda, \eta}_{0,r}[ r+ \floor{a_1tr^{2/3}},  r+ \floor{a_2tr^{2/3}}]\Big) \geq 1- Ct^{-3}\label{d1est1}\\
&\mathbb{P} \Big(\wt G^{ \textup{N}, \lambda, \eta}_{0,r}[ r+\floor{a_2tr^{2/3}}+1,\infty ) < \wt G^{ \textup{N}, \lambda, \eta}_{0,r}\big[ r+\floor{a_1tr^{2/3}},  r+\floor{a_2tr^{2/3}}\big]\Big) \geq 1- Ct^{-3}. \label{d1est2}
\end{align}

We will start with showing \eqref{d1est1}. 
Because how we defined our $a_1, a_2$ and $q_1$ in Section \ref{fix_para}, by Proposition \ref{exit2}, the event
\begin{equation}\label{def_U1}
U_1 = \Big\{ G^{\textup{N},\lambda}_{0,r} = G^{\textup{N},\lambda}_{0,r}[ r+\floor{a_1tr^{2/3}},  r+ \floor{a_2tr^{2/3}}]\Big\}
\end{equation}
occurs with $1-e^{-Ct^3}$ probability. Then, the following equality holds on the event $U_1$,
$$\wt G^{ \textup{N}, \lambda, \eta}_{0,r}[ r+ \floor{a_1tr^{2/3}},  r+ \floor{a_2tr^{2/3}}] = G^{\textup{N},\lambda}_{0,r} - \sum_{i=1}^{\floor{a_1tr^{2/3}}} ( 
   I_i - \tfrac{1/2}{\lambda} I_i ).$$
On the other hand, $\wt G^{ \textup{N}, \lambda, \eta}_{0,r}(-\infty,  r+ \floor{a_1tr^{2/3}}-1]  =  G^{\textup{N}, 1/2}_{0,r}(-\infty,  r+ \floor{a_1tr^{2/3}}-1] \leq G^{\textup{N}, 1/2}_{0,r}.$
With these, it holds that 
$$\eqref{d1est1} \geq \mathbb{P} \Big(\Big\{G^{\textup{N}, 1/2}_{0,r} < G^{\textup{N},\lambda}_{0,r} - \sum_{i=1}^{\floor{a_1tr^{2/3}}} (I_i  -  \tfrac{1/2}{\lambda} I_i)\Big\} \cap U_1 \Big).$$
Now, because of $\mathbb{P}(U_1) \geq 1-e^{-Ct^3}$, it suffices for us to show that 
\begin{equation}\label{D1_eq}
\mathbb{P} \Big(G^{\textup{N}, 1/2}_{0,r} < G^{\textup{N},\lambda}_{0,r} - \sum_{i=1}^{\floor{a_1tr^{2/3}}} (  I_i  - \tfrac{1/2}{\lambda} I_i) \Big) \geq 1- Ct^{-3}.
\end{equation}
A similar estimate appears in \cite{seppcoal}, which we will present here for completeness. First, we will look at the expectations of the terms inside \eqref{D1_eq} and  argue that there exist constants $c_1, c_2$ such that 
\begin{align}
\mathbb{E}[G^{\textup{N},\lambda}_{0,r}] - \mathbb{E}[G^{\textup{N}, 1/2}_{0,r}] \geq c_1 q_1 t^2 r^{1/3}\label{lowbd}\\
\mathbb{E}\bigg[\sum_{i=1}^{\floor{a_1tr^{2/3}}-1} (   1- \tfrac{1/2}{\lambda}) I_i\bigg] \leq c_2 q_1 a_1 t^2 r^{1/3}.\nonumber
\end{align}
The first inequality above follows directly from the expectation formula \eqref{stat_expect}, and this exact inequality appears as (5.53) in the arXiv version of \cite{CGMlecture}. The second inequality is a direct expectation computation of a sum of i.i.d\ exponential random variables, which we omit the calculation details.

Recall the definition $a_1 = q_1 = \delta_0$, fix $\delta_0$ sufficiently small such that 
$$c_2 a_1 q_1 = c_2 \delta_0^2 \leq  c_1\delta_0/10,$$ then the inequality in \eqref{D1_eq} holds for their expected values. Next, to give the probability lower bound, we show all the random variables in \eqref{D1_eq} are concentrated around their expectations on the scale $t^2r^{1/3}$. Let us define the events
\begin{align*}
U_2 &= \Big\{\Big|G^{\textup{N},\lambda}_{0,r} - \mathbb{E}[G^{\textup{N},\lambda}_{0,r}]\Big|  
 \leq \tfrac{c_1 \delta_0}{10} t^2 r^{1/3}\Big\}\\
U_3 &= \Big\{\Big|G^{\textup{N}, 1/2}_{0,r} - \mathbb{E}[G^{\textup{N}, 1/2}_{0,r}]\Big|  
 \leq \tfrac{c_1 \delta_0}{10} t^2 r^{1/3}\Big\}\\
U_4 &= \bigg\{\sum_{i=1}^{\floor{a_1tr^{2/3}}} ( \tfrac{1/2}{\lambda} -1) I_i - \mathbb{E}\bigg[\sum_{i=1}^{\floor{a_1tr^{2/3}}} ( 1- \tfrac{1/2}{\lambda} ) I_i\bigg] \leq \tfrac{c_1\delta_0}{10} t^2 r^{1/3} \bigg\}
\end{align*}
By Markov inequality,  $\mathbb{P}(U_2^c)$ and $\mathbb{P}(U_3^c)$ can be upper bounded in terms of the  variances
\begin{align*}
\mathbb{P}(U_2^c) \leq C\frac{\Var[G^{\textup{N},\lambda}_{0,r}]}{t^4r^{2/3}} \qquad 
\mathbb{P}(U_3^c) \leq C \frac{\Var[G^{\textup{N}, 1/2}_{0,r}]}{t^4r^{2/3}},
\end{align*}
and by Proposition \ref{stat_var} both variances are bounded by $Ctr^{2/3}$. This implies that  $\mathbb{P}(U_2)$ and  $\mathbb{P}(U_3)$ are lower bounded by $1-Ct^{-3}$. Lastly, $\mathbb{P}(U_4^c)$ can be upper bounded by $e^{-Ct}$ by standard concentration inequality for sub-exponential random variables, for reference see Section A.1 of \cite{diff_timecorr}. 
Now, because the event $U_2\cap U_3 \cap U_4$ is contained inside the event from \eqref{D1_eq}, we have finished the proof of \eqref{d1est1}.

To see \eqref{d1est2}, it suffices to show that the event's complement is contained inside $U_1^c$, whose probability is upper bounded by $e^{-Ct^3}$ by Proposition \ref{exit2}. To see this, on the complementing event of  $\eqref{d1est2}$
\begin{equation}\label{d1est2_c}
\Big\{\wt G^{ \textup{N}, \lambda, \eta}_{0,r}[ r+ \floor{a_2tr^{2/3}}+1,\infty ] \geq \wt G^{ \textup{N}, \lambda, \eta}_{0,r}\big[ r+\floor{a_1tr^{2/3}},  r+\floor{a_2tr^{2/3}}\big]\Big\},
\end{equation}let us denote the geodesic of $\wt G^{ \textup{N}, \lambda, \eta}_{0,r}[ r+\floor{a_2tr^{2/3}}+1,\infty ]$ as ${\boldsymbol\gamma}^*$. Next, we will argue that
\begin{equation}\label{gamma}
G^{\textup{N},\lambda}({\boldsymbol\gamma}^*) \geq G^{\textup{N},\lambda}_{0,r}\big[ r+\floor{a_1tr^{2/3}},  r+\floor{a_2tr^{2/3}}\big] \qquad \text{ on the event \eqref{d1est2_c}}.
\end{equation}
To see this, note changing the weights from $\Exp(1/2)$ to $\Exp(\lambda)$ for $\{I_i\}_{1\leq i\leq\floor{a_1tr^{2/3}}}$ does not affect the difference of the two passage values above, and decreasing the boundary weights $\{I_j\}_{j\geq \floor{a_2tr^{2/3}}+1}$ from $\Exp(1/2)$ and $\Exp(\eta)$ to $\Exp(\lambda)$ would only increase the passage value of $G^{\textup{N},\lambda}({\boldsymbol\gamma}^*)$. This justifies \eqref{gamma}, which shows that the event \eqref{d1est2_c} is contained inside $U_1^c$.
With this, we have shown both \eqref{d1est1}, \eqref{d1est2}, and finished the proof of our lemma. 
\end{proof}

Next, we will prove Lemma \ref{lemD_2}.

\begin{proof}[Proof of Lemma \ref{lemD_2}]
It suffices for us to show the following inequality
\begin{equation}
\mathbb{P} \Big(\wt G^{\lambda, \eta}_{r,n}(-\infty, r+\floor{b_1t(n-r)^{2/3}}-1] < \wt G^{\lambda, \eta }_{r,n}\big[r+ \floor{b_1t(n-r)^{2/3}}, r+ \floor{b_2t(n-r)^{2/3}}\big]\Big) \geq 1- Ct^{-3},\label{d2est}
\end{equation}
and the argument is similar to \eqref{d1est1}. Because how we fixed $b_1, b_2$ and $q_2$, by Proposition \ref{exit2}, the event
\begin{equation}\label{def_V1}
V_1 = \Big\{ G^{\eta}_{r,n} = G^{\eta}_{r,n}\big[r+\floor{b_1t(n-r)^{2/3}}, r+\floor{b_2t(n-r)^{2/3}}\big]\Big\}
\end{equation}
occurs with probability at least $1-e^{-Ct^3}$ . Then, the following equality holds on the event $V_1$,
\begin{align*}&\wt G^{\lambda, \eta}_{r,n}[r+ \floor{b_1t(n-r)^{2/3}}, r + \floor{b_2t(n-r)^{2/3}}] \\
&= G^{\eta}_{r,n} - \sum_{i=1}^{\floor{a_1tr^{2/3}}} 
  (\tfrac{1/2}{\eta}-1) I_i  
 - \sum_{i=\floor{a_1tr^{2/3}}+1}^{\floor{a_2tr^{2/3}}} 
    (\tfrac{1/2}{\eta} - \tfrac{1/2}{\lambda}) I_i 
     - \sum_{i=\floor{a_2tr^{2/3}}+1}^{\floor{b_1t(n-r)^{2/3}}} ( 
     \tfrac{1/2}{\eta}-1) I_i.
     \end{align*}
On the other hand, $\wt G^{\lambda, \eta}_{r,n}(-\infty, r+ \floor{b_1t(n-r)^{2/3}}-1] \leq G^{1/2}_{r,n}.$
With these and the fact that $\mathbb{P}(V_1) \geq 1-e^{-Ct^3}$, it suffices for us to show that 
\begin{align}
\mathbb{P} \bigg(G^{1/2}_{r,n} < G^{\eta}_{r,n} &- \sum_{i=1}^{\floor{a_1tr^{2/3}}} 
  (\tfrac{1/2}{\eta}-1) I_i  \label{eta_lb}\\
 & - \sum_{i=\floor{a_1tr^{2/3}}+1}^{\floor{a_2tr^{2/3}}} 
    (\tfrac{1/2}{\eta} - \tfrac{1/2}{\lambda}) I_i 
     - \sum_{i=\floor{a_2tr^{2/3}}+1}^{\floor{b_1t(n-r)^{2/3}}} ( 
     \tfrac{1/2}{\eta}-1) I_i\bigg) \geq 1- Ct^{-3}.\noindent
\end{align}
Note that the expectations of the three i.i.d\ random walk sums appearing above are upper bounded by 
\begin{align*}
\mathbb{E}\bigg[\sum_{i=1}^{\floor{a_1tr^{2/3}}} 
  (\tfrac{1/2}{\eta}-1) I_i + \sum_{i=\floor{a_1tr^{2/3}}+1}^{\floor{a_2tr^{2/3}}} 
    (\tfrac{1/2}{\eta} - \tfrac{1/2}{\lambda}) I_i +  \sum_{i=\floor{a_2tr^{2/3}}+1}^{\floor{b_1t(n-r)^{2/3}}} ( 
     \tfrac{1/2}{\eta}-1) I_i\bigg] &\leq c_3b_1q_2 t^2 n^{1/3}
\end{align*}
for some absolute constant $c_3$. Similar to \eqref{lowbd}, let $c_4$ be a constant such that $\mathbb{E}[G^{\eta}_{r,n}] - \mathbb{E}[G^{1/2}_{r, n}] \geq c_4 q_2 t^2 n^{1/3}$.
By lowering the value of $\delta_0$ further if necessary, it holds that 
$$c_3b_1q_2  = c_3 (\sqrt{\delta_0})^2\leq c_4\sqrt{\delta_0}/10.$$

Now, we have shown that the inequality in \eqref{eta_lb} holds for their expected values. Then, the concentration argument is the same as before, i.e.,  we define the events
\begin{align*}
V_2 &= \Big\{\Big|G^{1/2}_{r,n} - \mathbb{E}[G^{1/2}_{r,n}]\Big|  
 \leq \tfrac{c_4\sqrt{\delta_0}}{10} t^2 n^{1/3}\Big\}\\
V_3 &= \Big\{\Big|G^{\eta}_{r,n} - \mathbb{E}[G^{\eta}_{r,n}]\Big|  
 \leq \tfrac{c_4\sqrt{\delta_0}}{10} t^2 n^{1/3}\Big\}\\
V_4 &= \bigg\{\sum_{i=1}^{\floor{a_1tr^{2/3}}} 
  (\tfrac{1/2}{\eta}-1) I_i  - \sum_{i=\floor{a_1tr^{2/3}}+1}^{\floor{a_2tr^{2/3}}} 
    (\tfrac{1/2}{\eta} - \tfrac{1/2}{\lambda}) I_i 
     - \sum_{i=\floor{a_2tr^{2/3}}+1}^{\floor{b_1t(n-r)^{2/3}}} ( 
     \tfrac{1/2}{\eta}-1) I_i \\
     &- \mathbb{E}\bigg[\sum_{i=1}^{\floor{a_1tr^{2/3}}} 
  (\tfrac{1/2}{\eta}-1) I_i  - \sum_{i=\floor{a_1tr^{2/3}}+1}^{\floor{a_2tr^{2/3}}} 
    (\tfrac{1/2}{\eta} - \tfrac{1/2}{\lambda}) I_i 
     - \sum_{i=\floor{a_2tr^{2/3}}+1}^{\floor{b_1t(n-r)^{2/3}}} ( 
     \tfrac{1/2}{\eta}-1) I_i\bigg]
     \leq \tfrac{c_4\sqrt{\delta_0}}{10} t^2 n^{1/3} \bigg\}
\end{align*}
and their estimates are the same as those we derived for \(U_2\), \(U_3\), and \(U_4\) in the proof of Lemma \ref{lemD_1}. Thus we may omit this last part and conclude that the probability lower bound \eqref{eta_lb} holds.

\end{proof}

\subsection{Proof of Lemma \ref{lemD_12} and Lemma  \ref{lemD_22}}
We will again adapt the notation that for $a<b$ and $c<d$, $Z^{\rho}_{0, r}[a, b]$ and $Z^{\textup{N}, \rho}_{r,n}[c, d]$ denote the partition functions by only summing over paths whose $\mathbf{e}_1$-coordinates of the starting points on the boundary $y=r$ lie inside the intervals $[a,b]$ and $[c,d]$ respectively. 

In addition, let $\{I^\lambda\}_{j\in \mathbb{Z}}$ and $\{I^\eta\}_{j\in \mathbb{Z}}$ be the boundary weights along $y=r$  with $\textup{Ga}^{-1}(\lambda)$ and $\textup{Ga}^{-1}(\eta)$ distributions.  And let 
$Z^{\textup{N}, 1/2}_{0,r}$, $Z^{\textup{N},\lambda}_{0,r}$ and $Z^{ \textup{N}, \eta}_{0,r}$ denote the partition functions for the boundary weights with distributions $\text{Ga}^{-1}(1/2), \text{Ga}^{-1}(\lambda)$ and $\text{Ga}^{-1}(\eta)$, respectively.  The notation for the free energy from $(r,r)$ to $(n,n+1)$ will follow the same superscript convention, except the superscript``${\textup{N}}$'' will be omitted.

\begin{proof}[Proof of Lemma \ref{lemD_12}]
To start, it suffices for us to show the following two inequalities 
\begin{align}
&\mathbb{P} \Big(\log \wt Z^{ \textup{N}, \lambda, \eta}_{0,r}(-\infty, r+\floor{a_1tr^{2/3}}-1] \nonumber\\
& \qquad \qquad -  \log \wt Z^{ \textup{N}, \lambda, \eta}_{0,r}[ r+ \floor{a_1tr^{2/3}},  r+ \floor{a_2tr^{2/3}}] \leq -C't^2 r^{1/3} \Big) \geq 1- Ct^{-3}\label{d1est11}\\
&\mathbb{P} \Big(\log \wt Z^{ \textup{N}, \lambda, \eta}_{0,r}[ r+\floor{a_2tr^{2/3}}+1,\infty ) \nonumber\\
& \qquad \qquad - \log \wt Z^{ \textup{N}, \lambda, \eta}_{0,r}\big[ r+\floor{a_1tr^{2/3}},  r+\floor{a_2tr^{2/3}}\big] \leq -C' t^2 r^{1/3}\Big) \geq 1- Ct^{-3}. \label{d1est21}
\end{align}
Note these two inequalities are similar to \eqref{d1est1} and \eqref{d1est2} in the CGM proof, except for the additional factor \(C't^2 r^{1/3}\) inside the inequalities. This is not problematic because, even in the CGM proofs, we have shown that on the high probability event that we constructed, the difference between the two last-passage values is actually of order \(t^2 r^{1/3}\) apart. Thus, similar arguments can be applied here, and for completeness, we give a sketch below.

Starting \eqref{d1est11}, by Proposition \ref{exit3}, the event
$$
U_1 = \Big\{ \log Z^{\textup{N},\lambda}_{0,r}[ r+\floor{a_1tr^{2/3}},  r+ \floor{a_2tr^{2/3}}] - \log Z^{\textup{N},\lambda}_{0,r} \geq \log 1/2 \Big\}
$$
occurs with $1-e^{-Ct^3}$ probability. Then, the following inequality holds on the event $U_1$,
\begin{align*}&\log \wt Z^{ \textup{N}, \lambda, \eta}_{0,r}[ r+ \floor{a_1tr^{2/3}},  r+ \floor{a_2tr^{2/3}}]\\
&= \log Z^{\textup{N},\lambda}_{0,r}[ r+\floor{a_1tr^{2/3}},  r+ \floor{a_2tr^{2/3}}]- \sum_{i=1}^{\floor{a_1tr^{2/3}}} ( 
   \log I_i -  \log I^\lambda_i )\\
&\geq \log Z^{\textup{N},\lambda}_{0,r} + \log 1/2 - \sum_{i=1}^{\floor{a_1tr^{2/3}}} ( 
   \log I_i -  \log I^\lambda_i ) \qquad \text{ on the event $U_1$}.
\end{align*}
Together with the fact that 
$$\log \wt Z^{ \textup{N}, \lambda, \eta}_{0,r}(-\infty,  r+ \floor{a_1tr^{2/3}}-1]  =  \log Z^{\textup{N}, 1/2}_{0,r}(-\infty,  r+ \floor{a_1tr^{2/3}}-1] \leq \log Z^{\textup{N}, 1/2}_{0,r},$$
it holds that 
$$\eqref{d1est11} \geq \mathbb{P} \Big(\Big\{\log Z^{\textup{N}, 1/2}_{0,r} < \log Z^{\textup{N},\lambda}_{0,r} + \log 1/2 - \sum_{i=1}^{\floor{a_1tr^{2/3}}} (\log I_i  -  \log I^\lambda_i) - C't^2 r^{1/3}\Big\} \cap U_1 \Big).$$
Now, because of $\mathbb{P}(U_1) \geq 1-e^{-Ct^3}$ and absorbing the $\log 1/2$ term with $-C't^2 r^{1/3}$, it suffices for us to show that 
$$
\mathbb{P} \Big(\log Z^{\textup{N}, 1/2}_{0,r} < \log Z^{\textup{N},\lambda}_{0,r} - \sum_{i=1}^{\floor{a_1tr^{2/3}}} ( \log I_i  - \log  I^\lambda_i) - C' t^2 r^{1/3} \Big) \geq 1- Ct^{-3}.
$$
Using the explicit formula of the expectations, there exists a positive constant $c_1$ such that 
\begin{align*}
\mathbb{E}[\log Z^{\textup{N},\lambda}_{0,r}] - \mathbb{E}[\log Z^{\textup{N}, 1/2}_{0,r}] \geq c_1 \delta_0 t^2 r^{1/3}\\
\mathbb{E}\bigg[\sum_{i=1}^{\floor{a_1tr^{2/3}}} (\log I_i - \log I^\lambda_i) \bigg] \leq \frac{c_1 \delta_0 }{10} t^2 r^{1/3},\nonumber
\end{align*}
provided that $a_1 = q_1 = \delta_0$ is fixed sufficiently small. 
Letting \( C' = \frac{c_1\delta_0}{10} \), we can define \( U_2, U_3, U_4 \) similarly to the proof of Lemma \ref{lemD_1}. The same concentration inequalities can be applied, allowing us to obtain \eqref{d1est11}.

To see \eqref{d1est21}, we will upper bound the probability of the complement of the event in $\eqref{d1est21}$, i.e.
\begin{equation}\label{2poly}
\Big\{\log \wt Z^{\textup{N}, \lambda, \eta}_{0,r}[ r+ \floor{a_2tr^{2/3}}+1,\infty ] - \log \wt Z^{ \textup{N}, \lambda, \eta}_{0,r}\big[ r+\floor{a_1tr^{2/3}},  r+\floor{a_2tr^{2/3}}\big] > - C't^2 r^{1/3}\Big\}.
\end{equation}
To do this, we will argue that this event implies that 
$$\Big\{\log \wt Z^{\textup{N}, \lambda}_{0,r}[ r+ \floor{a_2tr^{2/3}}+1,\infty ] - \log \wt Z^{ \textup{N}, \lambda}_{0,r}\big[ r+\floor{a_1tr^{2/3}},  r+\floor{a_2tr^{2/3}}\big] > - C't^2 r^{1/3}\Big\},$$
whose probability is upper bounded by $e^{-Ct^3}$ by Proposition \ref{exit3}.

To see this inclusion, first, we note that because of the monotone coupling of the gamma random variables, we may assume that 
\begin{equation}\label{mono_para}
I^\lambda_j \leq I^{1/2}_j \leq I^{\eta}_j.
\end{equation}
First, changing the weights from \(\{I_i\}_{1\leq i \leq \floor{a_1 t r^{2/3}} }\) to \(\{I_i^\lambda\}_{1\leq i \leq \floor{a_1 t r^{2/3}} }\) does not affect the difference of the two free energies in \eqref{2poly}. We note that in this step, \(\log \wt Z^{\textup{N}, \lambda, \eta}_{0, r}[r + \floor{a_1 t r^{2/3}}, r + \floor{a_2 t r^{2/3}}]\) becomes \(\log \wt Z^{\textup{N}, \lambda}_{0, r}[r + \floor{a_1 t r^{2/3}}, r + \floor{a_2 t r^{2/3}}]\).
Second, changing the boundary weights \(\{I_j\}_{j \geq \floor{a_2 t r^{2/3}}+1}\) and \(\{I_j^\eta\}_{j \geq \floor{a_2 t r^{2/3}}+1}\) to \(\{I_j^\lambda\}_{j \geq \floor{a_2 t r^{2/3}}+1}\) in the definition of \(Z^{\textup{N}, \lambda, \eta}_{0, r}[r + \floor{a_2 t r^{2/3}} + 1, \infty]\) would only increase the free energy due to \eqref{mono_para}.
After both steps, \(Z^{\textup{N}, \lambda, \eta}_{0, r}[r + \floor{a_2 t r^{2/3}} + 1, \infty]\) now becomes \(Z^{\textup{N}, \lambda}_{0, r}[r + \floor{a_2 t r^{2/3}} + 1, \infty]\). 
With this, we have shown both \eqref{d1est11} and \eqref{d1est21}, thus finishing the proof of our lemma.
\end{proof}

Finally, for Lemma \ref{lemD_22}, 
it suffices for us to show
\begin{align*}
&\mathbb{P} \Big(\log \wt Z^{\lambda, \eta}_{r,n}(-\infty, r+\floor{b_1t(n-r)^{2/3}}-1] \\
& \qquad \qquad \qquad  -  \log \wt Z^{\lambda, \eta }_{r,n}\big[r+ \floor{b_1t(n-r)^{2/3}}, r+ \floor{b_2t(n-r)^{2/3}}\big] 
\leq - C' t^2 (n-r)^{1/3}\Big) \geq 1- Ct^{-3}.
\end{align*}
Since this is essentially the same as \eqref{d1est11}, and we have presented the argument in the CGM, we omit the details here.

\appendix
\section{Appendix}

We record two lemmas about the $L^2$ bounds for the Radon-Nikodym derivative when perturbing i.i.d.\ exponential and inverse-gamma random variables. 
\begin{lemma} \label{radnik} 
Let  $a_1, a_2>0$, $b\in \mathbb{R}$  and $r, n\in\Z_{>0}$ with $r \leq n$.  Assume that 
$$
|b|<\epsilon_0 r^{1/3}
$$
for some small positive $\epsilon_0$.
\begin{itemize}[noitemsep]
\item Let $Q$ be the  probability distribution on the product space $\Omega=\R^{\fl{a_1r^{2/3}} + \fl{a_2n^{2/3}}}$  under which the coordinates $X_i(\w)=\w_i$  are i.i.d.\ {\rm Exp}$(\tfrac{1}{2})$ random variables.  
\item Let $\wt Q$ denote the probability distribution such that $\omega_i \sim \Exp(\tfrac{1}{2} - br^{-1/3})$ for $i \in [1, \fl{a_1r^{2/3}}]$ and $\omega_j \sim \Exp(\tfrac{1}{2} + bn^{-1/3})$ for $j \in [\fl{a_1r^{2/3}}+1, \fl{a_1r^{2/3}} + \fl{a_2n^{2/3}} ]$, and all are independent.
\end{itemize}
Let $f$ denote the Radon-Nikodym derivative 
$  f(\w)=\tfrac{d \wt Q}{dQ}(\w). $
Then, there exists a constant $C$ such that 
\[  \mathbb E^{Q}\big[ f^2\big]   \leq e^{Cab^2}. 
\]
\end{lemma}

\begin{proof}
To simplify the notation, let $\lambda = \tfrac{1}{2} - br^{-1/3}$ and $\eta = \tfrac{1}{2} + bn^{-1/3}$. A direct calculation shows that 
\begin{align}
    E^{Q} [f^2] & = \int_\Omega \biggl(\prod_{i=1}^{\floor{a_1r^{2/3}}} \frac{\lambda e^{-\lambda \omega_i}}{\tfrac{1}{2}e^{-\omega_i/2}}\biggr)^2 \biggl(\prod_{i=1}^{\floor{a_2n^{2/3}}} \frac{\eta e^{-\eta \omega_i}}{\tfrac{1}{2}e^{-\omega_i/2}}\biggr)^2  Q(d\w) \nonumber\\
    &=\left(\frac{\lambda^2}{(1/2)^2}\int_0^\infty e^{-2(\lambda - 1/2)x} \tfrac{1}{2} e^{- x/2} dx\right)^{\floor{a_1r^{2/3}}}\left(\frac{\eta^2}{(1/2)^2}\int_0^\infty e^{-2(\eta - 1/2)x} \tfrac{1}{2} e^{- x/2} dx\right)^{\floor{a_2n^{2/3}}}\nonumber\\
    & = \left( \frac{\lambda^2}{\tfrac{1}{2} (2\lambda -\tfrac{1}{2})}\right)^{\floor{a_1r^{2/3}}}\left( \frac{\eta^2}{\tfrac{1}{2} (2\eta -\tfrac{1}{2})}\right)^{\floor{a_2n^{2/3}}}\label{prod_term}.
\end{align}
Both terms in \eqref{prod_term} above can be upper bounded by $e^{Cab^2}$, which is shown in the last math display of Lemma A.2 in \cite{seppcoal}.
\end{proof}

\begin{lemma}\label{radnik2}
Let  $a_1, a_2>0$, $b\in \mathbb{R}$  and $r, n\in\Z_{>0}$ with $r \leq n$.  Assume that 
$$
|b|<\epsilon_0 r^{1/3}
$$
for some small positive $\epsilon_0$.
\begin{itemize}[noitemsep]
\item Let $Q$ be the  probability distribution on the product space $\Omega=\R^{\fl{a_1r^{2/3}} + \fl{a_2n^{2/3}}}$  under which the coordinates $X_i(\w)=\w_i$  are i.i.d.\ $\textup{Ga}^{-1}(\tfrac{1}{2})$ random variables.  
\item Let $\wt Q$ denote the probability distribution such that $\omega_i \sim \textup{Ga}^{-1}(\tfrac{1}{2} - br^{-1/3})$ for $i \in [1, \fl{a_1r^{2/3}}]$ and $\omega_j \sim \textup{Ga}^{-1}(\tfrac{1}{2} + bn^{-1/3})$ for $j \in [\fl{a_1r^{2/3}}+1, \fl{a_1r^{2/3}} + \fl{a_2n^{2/3}} ]$, and all are independent.
\end{itemize}
Let $f$ denote the Radon-Nikodym derivative 
$  f(\w)=\tfrac{d \wt Q}{dQ}(\w). $
Then, there exists a constant $C$ such that 
\[  \mathbb E^{Q}\big[ f^2\big]   \leq e^{Cab^2}. 
\]
\end{lemma}
Similar to Lemma \ref{radnik}, this follows from Proposition A.10 of \cite{ras-sep-she-}, we omit the details here.

\bibliographystyle{amsplain}
\bibliography{time}

\end{document}